\documentclass[11pt]{amsart}
\usepackage{amsmath}
\usepackage{amssymb}
\usepackage{amscd}

\def\NZQ{\mathbb}               
\def\NN{{\NZQ N}}

\def\RR{{\NZQ R}}
\def\CC{{\NZQ C}}

%
%
%
%

\newtheorem{Theorem}{Theorem}[section]

\newtheorem{Definition}[Theorem]{Definition}

%
%
\let\epsilon\varepsilon
\let\phi=\varphi
\let\kappa=\varkappa

%
%
\textwidth=15cm \textheight=22cm \topmargin=0.5cm
\oddsidemargin=0.5cm \evensidemargin=0.5cm \pagestyle{plain}
\begin{document}

\title{Rectilinearization of sub analytic sets as a consequence of local monomialization}
\author{Steven Dale Cutkosky}
\thanks{partially supported by NSF}

\address{Steven Dale Cutkosky, Department of Mathematics,
University of Missouri, Columbia, MO 65211, USA}
\email{cutkoskys@missouri.edu}

\begin{abstract} 
We give a new proof of the rectilinearization theorem of Hironaka. We deduce rectilinearization as a consequence of our theorem on local monomialization for complex and real analytic morphisms.
\end{abstract}

\maketitle

In this paper we  deduce Hironka's rectilinearization theorem  \cite{H}  as an application of our theorem on local monomialization for complex and real analytic morphisms \cite{C}.

\begin{Definition} Suppose that $\phi:Y\rightarrow X$ is a  morphism of  complex or real analytic manifolds, and $p\in Y$. We will say that the map $\phi$ is monomial at $p$ if there exist regular parameters $x_1,\ldots,x_m,x_{m+1},\ldots,x_t$ in $\mathcal O_{X,\phi(p)}^{\rm an}$ and $y_1,\ldots,y_n$ in $\mathcal O_{Y,p}^{\rm an}$ and $c_{ij}\in \NN$ such that
$$
\phi^*(x_i)=\prod_{j=1}^ny_j^{c_{ij}}\mbox{ for }1\le i\le m
$$
with $\mathop{rank}(c_{ij})=m$ and $\phi^*(x_i)=0$ for $m<i\le t$. 

We will say that $\phi$ is monomial on $Y$ if there exists an open cover of $Y$ by open sets $U_k$ which are isomorphic to open subsets of $\CC^n$ (or $\RR^n$) and an open cover of $X$ by open sets $V_k$ which are isomorphic to open subsets of $\CC^t$ (or $\RR^t$) such that $\phi(U_k)\subset V_k$ for all $i$ and there exist
$c_{ij}(k)\in\NN$ such that
$$
\phi^*(x_i)=\prod_{j=1}^ny_j^{c_{ij}(k)}\mbox{ for }1\le i\le m
$$
with $\mathop{rank}(c_{ij})=m$ and $\phi^*(x_i)=0$ for $m< i\le t$, and 
where $x_i$ and $y_j$ are the respective coordinates on $\CC^t$ and $\CC^n$ (or $\RR^t$ and $\RR^n$). \end{Definition}

A local blow up of an  analytic space $X$  is a morphism $\pi:X'\rightarrow X$ determined by a triple $(U,E,\pi)$ where $U$ is an open subset of $X$, $E$ is a closed  analytic subspace  of $U$ and $\pi$ is the composition of the inclusion of $U$ into $X$ with the blowup of $E$.

Hironaka \cite{Hcar} and \cite{H} introduced the notion of an \'etoile on a complex analytic
space $Y$ to generalize a valuation of a function field of an algebraic
variety. On any  local blowup $Y_1$ of $Y$, an \'etoile $e$ determines a unique point on $Y_1$ called the
center of $e$.
We say that a sequence of local blowups $Y_1$ of a complex analytic space $Y$ is in an \'etoile $e$ on $Y$ if $e$ has a center on $Y_1$.

We now state the local monomialization theorem for complex analytic morphisms.

\begin{Theorem}\label{TheoremLM} (Local Monomialization, Theorem 1.2 \cite{C}) Suppose that $\phi:Y\rightarrow X$ is a morphism of reduced complex analytic spaces  and $e$ is an \'etoile over $Y$. Then there exists a commutative diagram of complex analytic morphisms
$$
\begin{array}{ccc}
Y_{e}&\stackrel{\phi_{e}}{\rightarrow}&X_e\\
\beta\downarrow &&\downarrow \alpha\\
Y&\stackrel{\phi}{\rightarrow}& X
\end{array}
$$
such that $\beta\in e$, the morphisms $\alpha$ and $\beta$ are  finite products of local blow ups of nonsingular analytic sub varieties, $Y_e$ and $ X_e$ are nonsingular analytic  spaces and $\phi_e$ is a  monomial  analytic morphism.

There exists a nowhere dense closed analytic subspace $F_e$ of $X_e$ such that $X_e\setminus F_e\rightarrow X$ is an open embedding and $\phi_e^{-1}(F_e)$ is nowhere dense in $Y_e$.
\end{Theorem}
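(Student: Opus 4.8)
\section*{Proof proposal}

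The plan is to reduce to the algebraic local monomialization of a morphism of regular local rings along a valuation, to carry that out by the Zariski--Abhyankar local uniformization method in a relative form, to absorb units by passing to roots (which is possible analytically), and finally to spread the local solution to an analytic neighborhood using the vo\^ute \'etoil\'ee formalism.

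First I would make the standard reductions. By Hironaka's resolution of singularities for reduced complex analytic spaces --- a finite succession of blow ups of nonsingular centers --- I may assume that both $X$ and $Y$ are nonsingular; the blow ups over $Y$ are performed at the successive centers of $e$, so the resulting morphism lies in $e$, and $e$ lifts to an \'etoile over the new $Y$. Let $q$ be the center of $e$ on $Y$ (near which $Y$ is nonsingular) and $p=\phi(q)$. The compatible system of centers of $e$ determines a valuation $\nu$ dominating $\mathcal O^{\rm an}_{Y,q}$ whose center on every local blow up in $e$ is the center of $e$; consequently, every blow up we perform below along $\nu$ automatically lies in $e$. All local rings in sight are excellent regular local rings, so the algebraic monomialization techniques apply to them directly, and passage from formal to convergent data, where needed, is supplied by Artin approximation.

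The heart of the proof is the local statement: given $\mathcal O^{\rm an}_{X,p}\to\mathcal O^{\rm an}_{Y,q}$ and $\nu$, there are finite sequences of blow ups of nonsingular subvarieties along $\nu$ after which there are regular systems of parameters $x_1,\dots,x_t$ on the $X$-side and $y_1,\dots,y_n$ on the $Y$-side with $\phi^*(x_i)=\delta_i\prod_j y_j^{c_{ij}}$ for $i\le m$ (where $\delta_i$ is a unit and $\mathop{rank}(c_{ij})=m$) and $\phi^*(x_i)=0$ for $i>m$. I would prove this by monomializing $\phi^*(x_1),\phi^*(x_2),\dots$ one at a time: assuming $\phi^*(x_1),\dots,\phi^*(x_s)$ are already monomials times units in a common regular system of parameters of the current $Y$-side, one brings $\phi^*(x_{s+1})$ into the same form by further blow ups along $\nu$ that preserve the earlier monomial structure. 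This step interleaves Perron transformations (monomial blow ups dictated by exponent vectors) with Tschirnhausen-type changes of the last parameter, and one shows that an invariant --- built from $\nu(\phi^*(x_{s+1}))$ together with a measure of its failure to be monomial, ordered lexicographically and well-founded by the value group of $\nu$ --- strictly drops, so the procedure terminates. The rank condition $\mathop{rank}(c_{ij})=m$ is then arranged by blowing up the $X$-side to discard the parameters whose pullbacks are not needed and by choosing the $x_i$ so that the exponent matrix has maximal rank. Finally the units $\delta_i$ are eliminated by an analytic change of parameters $y_j\mapsto y_jw_j$: taking logarithms, this becomes the linear system $\sum_j c_{ij}\log w_j=\log\delta_i$, which is solvable for analytic germs $\log w_j$ because some nonzero integer minor of $(c_{ij})$ is a unit in $\mathcal O^{\rm an}_{Y,q}$; this use of roots of units is exactly where the analytic category, rather than the algebraic one, is essential.

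It remains to globalize and to produce $F_e$. The blow ups above are defined by coherent ideal sheaves, so they are genuine local blow ups, and the finitely many monomial identities persist on an open neighborhood of the centers; shrinking the charts $U_k,V_k$ accordingly yields the commutative diagram with $\phi_e$ monomial on $Y_e$ and $\beta\in e$. Take $F_e$ to be the union of the exceptional loci of the local blow ups composing $\alpha$, enlarged if necessary so that $\alpha$ restricts to an open embedding $X_e\setminus F_e\hookrightarrow X$; this is a nowhere dense closed analytic subspace, being a divisor in the nonsingular space $X_e$. That $\phi_e^{-1}(F_e)$ is nowhere dense follows from monomiality: the centers blown up on the $X$-side may be taken to lie over proper analytic subvarieties of the corresponding $Y$-side, so $\phi_e(Y_e)$ lies in no exceptional divisor of $\alpha$, and then, locally, $\phi_e^{-1}(F_e)$ is a union of the divisors $\{y_j=0\}$ with $c_{ij}>0$ for some $i\le m$, which is nowhere dense in $Y_e$. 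I expect the main obstacle to be the inductive local step --- exhibiting an invariant that genuinely drops under the chosen Perron and Tschirnhausen transforms along every branch of the algorithm, and proving termination by means of $\nu$, which is where the combinatorial and the higher-order parts of the reduction interact; the \'etoile bookkeeping is a secondary difficulty, handled by the vo\^ute \'etoil\'ee formalism together with Artin approximation.
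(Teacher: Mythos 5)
The paper does not prove Theorem \ref{TheoremLM}: it is quoted from Theorem 1.2 of \cite{C} and used as a black-box input to the rectilinearization argument. So there is no internal proof to compare your proposal against; what you have written is a sketch of a proof of the cited theorem itself, and it has to be judged on those terms.

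As such a sketch it follows the right general program (that of \cite{Ast}, \cite{LMTE} and \cite{C}): resolve $X$ and $Y$, reduce to a local statement about regular local rings along a valuation, monomialize the $\phi^*(x_i)$ one at a time by Perron and Tschirnhausen transformations governed by a decreasing invariant, absorb units by extracting roots (possible analytically), and globalize. But two essential steps are asserted rather than proved, and each is a genuine gap. First, the passage from the \'etoile $e$ to a valuation $\nu$ ``dominating $\mathcal O^{\rm an}_{Y,q}$ whose center on every local blow up in $e$ is the center of $e$'' is not a formality: an \'etoile is a subcategory of $\mathcal E(Y)$, not a valuation, and Hironaka introduced it precisely because an analytic space has no function field on which to put a valuation; establishing a usable correspondence, and with it the compatibility $\beta\in e$ for the blow ups you perform, is one of the substantive points of \cite{C}. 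Relatedly, the claim that ``the algebraic monomialization techniques apply directly'' to the rings $\mathcal O^{\rm an}_{Y,q}$ is unjustified: the theorems of \cite{Ast} and \cite{LMTE} are proved for finitely generated field extensions, and their extension to convergent power series rings (whose fraction fields are not finitely generated over $\CC$) is exactly the new content of \cite{C}; it requires more than an appeal to excellence and Artin approximation. Second, the termination of the inductive local step --- ``one shows that an invariant $\ldots$ strictly drops'' --- is the entire combinatorial and analytic heart of the theorem; in the algebraic case it occupies a monograph, and you exhibit no candidate invariant and no descent argument, while yourself flagging this as the main obstacle. The remaining parts of the sketch (elimination of units by solving $\sum_j c_{ij}\log w_j=\log\delta_i$ using $\mathop{rank}(c_{ij})=m$, and the construction of $F_e$ from the exceptional loci of $\alpha$) are plausible and consistent with how the cited proof concludes, but they rest on the two unproved pillars above.
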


Local monomialization theorems for real analytic morphisms  are also proven in \cite{C}. Local monomialization  along an arbitrary valuation is proven for morphisms of algebraic varieties in characteristic zero in \cite{Ast}, \cite{C3}  and \cite{LMTE}. Counterexamples to local monomialization for a morphism of characteristic $p>0$ algebraic varieties is given in \cite{C4}.

In  Theorem \ref{TheoremRdup} below, we show that  Hironaka's rectilinearization theorem, which was originally proven in \cite{H} can be deduced from local monomializiation,  Theorem \ref{TheoremLM}. Besides local monomialization,  our proof uses 
complexification of real analytic morphisms (Section 1 \cite{H}), resolution of singularities of analytic spaces (Hironaka \cite{H0}, \cite{HRes},  Aroca, Hironaka and Vicente \cite{AHV} and Bierstone and Milman \cite{BM1}), the Tarski Seidenberg Theorem, 
the fact that rectilinearization is true for semi analytic sets (this is a consequence of resolution of singularities, Hironaka \cite{H}) and the fact that the natural map from the vo\^ute \'etoil\'ee  of a complex analytic space $X$ to $X$ is proper (Hironaka, \cite{Hcar} and \cite{H}). 

Hironaka's proof (in Theorem 7.1 \cite{H}) makes essential use of the  local  flattening theorem (Hironaka,  Lejeune and Teissier \cite{HLT} and Section 4 of \cite{H}) and the fiber cutting lemma (Lemma 7.3.5 \cite{H}) to reduce to consideration  of proper finite morphisms.  In our proof, these arguments are replaced by the local monomialization theorem, Theorem \ref{TheoremLM}, and the reductions of Theorems \ref{TheoremA} and Theorem \ref{TheoremB} of this paper.

Some other notable proofs of rectilinearization are by Denef and Van Den Dries \cite{DV},  Bierstone and Milman \cite{BM3} and Parusinski \cite{aP}.  Hironaka deduces Lojasiewicz's inequalities for sub analytic sets  from rectilinearization in \cite{H}.

The related concept (to monomialization) of  toroidalization  for morphisms of algebraic varieties (\cite{AK} and \cite{ADK}) is used by Denef to prove $p$-adic quantifier elimination and related results \cite{De}, \cite{De1}. 
Some papers  on topics related to this article are   Teissier \cite{T},  Cano \cite{Ca},   Panazzolo \cite{P},  Lichtin, \cite{Li} and \cite{Li2}, Belotto \cite{B1} and Belotto, Bierstone, Grandjean and Milman \cite{BBGM}.

\begin{Theorem}\label{TheoremRdup}(Rectilinearization)  Let $X$ be a smooth connected real analytic space and let $A$ be a sub analytic subset of $X$. Let $p\in X$ and let $n=\dim X$. Then there exist a finite number of real analytic morphisms $\pi_{\alpha}:V_{\alpha}\rightarrow X$
which are finite sequences of local blowups over $X$ and  induce an open embedding of an open dense subset of $V_{\alpha}$ into $X$ such that:
\begin{enumerate}
\item[1)] Each $V_{\alpha}$ is isomorphic to $\RR^n$,
\item[2)] There exist compact neighborhoods $K_{\alpha}$ in $V_{\alpha}$ such that $\cup_{\alpha}(K_{\alpha})$ is a compact neighborhood of $p$ in $X$,
\item[3)] For each $\alpha$, $\pi_{\alpha}^{-1}(A)$ is union of quadrants in $\RR^n$.
\end{enumerate}
\end{Theorem}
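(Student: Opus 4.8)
\emph{Proof proposal.} The plan is to present $A$, near $p$, as built by boolean operations from images of proper real analytic morphisms, to monomialize a single morphism dominating all of these by Theorem~\ref{TheoremLM}, and then to fall back on the known rectilinearization of semianalytic sets; the role of the monomial structure is exactly that the pullback of $A$, which is only subanalytic to begin with, becomes semianalytic. First I would use the reductions of Theorem~\ref{TheoremA}, together with the standard structure theory of subanalytic sets (a closed subanalytic set is, after resolution of singularities, locally an image of a real analytic manifold under a proper morphism; the frontier of a subanalytic set has strictly smaller dimension; the subanalytic sets are closed under complement), to arrange, after shrinking $X$ to a neighborhood of $p$, a single proper real analytic morphism $\phi\colon Y\to X$ with $Y$ a possibly disconnected real analytic manifold, with $\phi$ surjective, and with $A$ expressed as a finite boolean combination of the closed subanalytic sets $\phi(Y^{(j)})$, where $Y=\bigsqcup_j Y^{(j)}$.

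Next I would complexify $\phi$ to a complex analytic morphism $\tilde\phi\colon\tilde Y\to\tilde X$ of complexifications (Section~1 of \cite{H}) and resolve so that $\tilde Y,\tilde X$ are nonsingular. Applying Theorem~\ref{TheoremLM}, to each \'etoile $e$ over $\tilde Y$ there is a commutative square with $\tilde\beta_e\in e$, with $\tilde\alpha_e,\tilde\beta_e$ finite products of local blowups of nonsingular centers, with $\tilde\phi_e$ monomial, and with a nowhere dense closed $F_e\subset\tilde X_e$ for which $\tilde X_e\setminus F_e\to\tilde X$ is an open embedding and $\tilde\phi_e^{-1}(F_e)$ is nowhere dense. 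Since the centers may be taken defined over $\RR$, passing to real points gives real analytic $\alpha_e\colon X_e\to X$, $\beta_e\colon Y_e\to Y$ and a monomial $\phi_e\colon Y_e\to X_e$, with $\alpha_e,\beta_e$ finite sequences of local blowups of nonsingular real analytic subvarieties, with the real analogues of the open embedding and nowhere density statements, and with the decomposition $Y_e=\bigsqcup_j Y_e^{(j)}$ preserved. Using properness of the natural map from the vo\^ute \'etoil\'ee of $\tilde Y$ to $\tilde Y$ (\cite{Hcar},\cite{H}) together with compactness, I would extract finitely many \'etoiles $e_1,\dots,e_r$; since $\phi$, hence each $\beta_{e_i}$, is surjective, the $\alpha_{e_i}$ then cover a compact neighborhood of $p$ in $X$, and I would fix compact neighborhoods $L_i\subset X_{e_i}$ of $\alpha_{e_i}^{-1}(p)$ with $\bigcup_i\alpha_{e_i}(L_i)$ a compact neighborhood of $p$.

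The crucial step, which is where Theorem~\ref{TheoremB} and the monomiality of $\phi_{e_i}$ enter, is that $\alpha_{e_i}^{-1}(A)$ is semianalytic near $\alpha_{e_i}^{-1}(p)$. For each $j$, surjectivity of $\beta_{e_i}$ gives $\alpha_{e_i}(\phi_{e_i}(Y_{e_i}^{(j)}))=\phi(\beta_{e_i}(Y_{e_i}^{(j)}))=\phi(Y^{(j)})$, so $\phi_{e_i}(Y_{e_i}^{(j)})\subset\alpha_{e_i}^{-1}(\phi(Y^{(j)}))$, and by the open embedding property of $\alpha_{e_i}$ off $F_{e_i}$ together with the nowhere density of $\phi_{e_i}^{-1}(F_{e_i})$, the two sets differ only inside a nowhere dense subanalytic set of dimension $<n$. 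The set $\phi_{e_i}(Y_{e_i}^{(j)})$ is, locally, a finite union of images of the monomial map $\phi_{e_i}$, and the image of a coordinate chart under a map of the form $\phi_{e_i}^*(x_\ell)=\prod_k y_k^{c_{\ell k}}$ is cut out by monomial equations and sign conditions, hence semianalytic; so $\alpha_{e_i}^{-1}(\phi(Y^{(j)}))$ is semianalytic up to a lower dimensional correction. Taking the boolean combination that expresses $A$, and absorbing the lower dimensional corrections into an induction on $\dim X$ (this d\'evissage being exactly the content of Theorems~\ref{TheoremA} and \ref{TheoremB}), $\alpha_{e_i}^{-1}(A)$ is semianalytic near $\alpha_{e_i}^{-1}(p)$. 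Now apply the known rectilinearization of semianalytic sets (a consequence of resolution, \cite{H}) to $\alpha_{e_i}^{-1}(A)\subset X_{e_i}$ at enough chart points to cover the compact set $\alpha_{e_i}^{-1}(p)$: this produces real analytic $\gamma_{i,\ell}\colon V_{i,\ell}\to X_{e_i}$, finite sequences of local blowups inducing open embeddings of dense opens, with $V_{i,\ell}\cong\RR^n$, compact neighborhoods $K_{i,\ell}\subset V_{i,\ell}$ covering a neighborhood of $\alpha_{e_i}^{-1}(p)$, and $\gamma_{i,\ell}^{-1}(\alpha_{e_i}^{-1}(A))$ a union of quadrants. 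Setting $\pi_{i,\ell}=\alpha_{e_i}\circ\gamma_{i,\ell}$ then gives finitely many morphisms as required: each is a finite sequence of local blowups inducing an open embedding of a dense open subset of $V_{i,\ell}\cong\RR^n$; the $K_{i,\ell}$ have union a compact neighborhood of $p$; and $\pi_{i,\ell}^{-1}(A)=\gamma_{i,\ell}^{-1}(\alpha_{e_i}^{-1}(A))$ is a union of quadrants.

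The main obstacle is this semianalyticity step: turning the merely subanalytic set $\alpha_e^{-1}(A)$ into a semianalytic set requires exploiting the rigidity of monomial maps and controlling with care the loci $F_e$ and $\phi_e^{-1}(F_e)$ over which the modifications fail to be open embeddings, all within a descending induction on $\dim X$ entangled with the reduction of $A$ to boolean combinations of images of manifolds; this is precisely what Theorems~\ref{TheoremA} and \ref{TheoremB} are designed to supply. A secondary technical matter, handled by the properness of the vo\^ute \'etoil\'ee, is the bookkeeping of compact neighborhoods through complexification, monomialization and the semianalytic rectilinearization, ensuring that finitely many pieces always cover a neighborhood of $p$.
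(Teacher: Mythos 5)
Your proposal follows essentially the same route as the paper: monomialize via Theorem~\ref{TheoremLM} with the vo\^ute \'etoil\'ee supplying the finiteness and covering statements, observe via Tarski--Seidenberg that images of monomial morphisms are semianalytic, absorb the loci where the modifications fail to be open embeddings into a descending dimension induction (the content of Theorems~\ref{TheoremA} and \ref{TheoremB}), and finish with Hironaka's rectilinearization of semianalytic sets (Proposition 7.2 and Lemma 7.2.1 of \cite{H}). One caution: the identity $\alpha_{e_i}(\phi_{e_i}(Y_{e_i}^{(j)}))=\phi(Y^{(j)})$ rests on your claim that $\beta_{e_i}$ is surjective, which fails for a product of local blowups (each step only sees an open subset of its target); repairing this requires taking finitely many monomializations covering $\rho_Y^{-1}(K)$, forming the join on the $X$-side, and arguing density off the bad locus $G_f$ --- exactly the work done in Theorem~\ref{TheoremA}, where moreover the induction descends on $\dim Y$ rather than on $\dim X$.
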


Semi analytic and sub analytic sets in a real analytic space are defined in Section \ref{SecSub}.

A subset $B$ of $\RR^n$ is a quadrant if there exists a partition $\{1,\ldots,n\}=I_0\cup I_1\cup I_{-}$ such that $B$ is the set of $x\in \RR^n$ such that $x_i=0$ for all $i\in I_0$, $x_j>0$ for all $j\in I_+$ and $x_k<0$ for all $k\in I_{-}$ where 
$x_1,\ldots,x_n$ are the natural coordinates of $x$ in $\RR^n$.

We thank Jan Denef for suggesting  rectilinearization as an application of  local monomialization of  analytic morphisms, and for discussion and  encouragement. We also thank Bernard Teissier for discussions on this and related problems.

\section{Semi analytic and sub analytic sets}\label{SecSub}

We review the definitions of semi analytic and sub analytic sets from Chapter 6 \cite{H} (also see Chapter 1 \cite{BM3} or \cite{BM2}).

Let $X$ be a set and $\Delta$ be a family of subsets of $X$. The elementary closure $\tilde\Delta$ of $\Delta$ is the smallest family of subsets of $X$ containing $\Delta$ which is stable under finite intersection, finite union and complement.

Suppose that $U$ is an open subset of a real analytic space $X$.

Let $\Delta_+(U)$ be the set of subsets $A$ of $U$ of the form $A=\{x\in U\mid f(x)>0\}$ for some real analytic function $f$ on $U$. A subset $A$ of $X$ is said to be semi analytic at $x_0\in X$ if there exists an open neighborhood $U$ of $x_0$ in $X$ such that $A\cap U$ belongs to the elementary closure of $\Delta_+(U)$. $A$ is said to be semi analytic in $X$ if it is semi analytic at every point of $X$.

Let $\Gamma(U)$ be the set of those closed subsets of $U$ which are images of proper real analytic maps $g:Y\rightarrow U$. A subset $A$ of $X$ is said to be sub analytic at $x_0\in X$ if there exists an open neighborhood $U$ of $x_0$ in $X$ such that $A\cap U$ belongs to the elementary closure of $\Gamma(U)$. $A$ is said to be sub analytic in $X$ if it is sub analytic at every point of $X$.

\section{Preliminaries on  \'etoiles and local blow ups}\label{Pre}

We require that an analytic space be Hausdorff.

An \'etoile is defined in Definition 2.1 \cite{Hcar}. An \'etoile $e$ over a complex analytic space $X$ is defined as a subcategory of the category of sequences of local blow ups $\mathcal E(X)$ over $X$. If $\pi:X'\rightarrow X\in e$ then a point $e_{X'}\in X'$ is associated to $e$. We will call $e_{X'}$ the center of $e$ on $X'$. The \'etoile associates a point $e_X\in X$ to $X$ and if $\pi_1:X_1\rightarrow U$ is a local blow up of $X$ such that $e_X\in U$ then $\pi_1\in e$ and $e_{X_1}\in X_1$ satisfies $\pi_1(e_{X_1})=e_X$. If $\pi_2:X_2\rightarrow U_1$ is a local blow up of $X_1$ such that $e_{X_1}\in U_1$ then $\pi_1\pi_2\in e$ and $e_{X_2}\in X_2$ satisfies $\pi_2(e_{X_2})=e_{X_1}$. Continuing in this way, we can construct sequences of local blow ups
$$
X_n\stackrel{\pi_n}{\rightarrow} X_{n-1}\rightarrow \cdots\rightarrow X_1\stackrel{\pi_1}{\rightarrow} X
$$
such that $\pi_1\cdots\pi_i\in e$, with associated points $e_{X_i}\in X_i$ such that $\pi_i(e_{X_i})=e_{X_{i-1}}$ for all $i$.

 Let $X$ be a complex analytic space. Let $\mathcal E_X$ be the set of all \'etoiles over $X$ and for
$\pi:X_1\rightarrow X$ a product of local blow ups, let 
$$
\mathcal E_{\pi}=\{e\in \mathcal E_X\mid \pi\in e\}.
$$
Then the $\mathcal E_{\pi}$ form a basis for a topology on $\mathcal E_X$. The space $\mathcal E_X$ with this topology is called the vo\^ ute \'etoil\'ee over $X$ (Definition 3.1 \cite{Hcar}). The vo\^ ute \'etoil\'ee is a generalization to complex analytic spaces of the Zariski Riemann manifold of a variety $Z$ in algebraic geometry (Section 17, Chapter VI \cite{ZS2}).

We have a canonical map $P_X:\mathcal E_X\rightarrow X$ defined by $P_X(e)=e_X$ which is continuous, surjective and proper (Theorem 3.4 \cite{Hcar}).
It is shown in Section 2 of \cite{Hcar} that given a product of local blow ups $\pi:X_1\rightarrow X$, there is a natural homeomorphism
$j_{\pi}:\mathcal E_{X_1}\rightarrow \mathcal E_{\pi}$ giving a commutative diagram
$$
\begin{array}{rccl}
\mathcal E_{X_1}&\cong \mathcal E_{\pi}&\subset &\mathcal E_X\\
P_{X_1}\downarrow&&&\downarrow P_X\\
X_1&&\stackrel{\pi}{\rightarrow}&X.
\end{array}
$$

The join  of $\pi_1,\pi_2\in \mathcal E(Y)$ is defined in Proposition 2.9 \cite{Hcar}. The join is a morphism $J(\pi_1,\pi_2):Y_J\rightarrow Y$. 
It has the following universal property: Suppose that $f:Z\rightarrow Y$ is a strict morphism (Definition 2.1 \cite{Hcar}). Then there exists a $Y$-morphism 
$Z\rightarrow Y_J$ if and only if there exist $Y$-morphisms $Z\rightarrow Y_1$ and $Z\rightarrow Y_2$.
It follows from 2.9.2 \cite{Hcar} that if $\pi_1,\pi_2,\in e\in \mathcal E_Y$, then $J(\pi_1,\pi_2)\in e$. 
We describe the construction of Proposition 2.9 \cite{Hcar}. In the case when $\pi_1$ and $\pi_2$ are each local blowups,
which are described by the data $(U_i,E_i,\pi_i)$, $J(\pi_1,\pi_2)$ is the blow up 
$$
J(\pi_1,\pi_2):Y_J=B(\mathcal I_{E_1}\mathcal I_{E_2}\mathcal O_Y|U_1\cap U_2)\rightarrow Y.
$$
Now suppose that $\pi_1$ is a product $\alpha_0\alpha_{1}\cdots \alpha_r$ where $\alpha_i:Y_{i+1}\rightarrow Y_i$ are local blow ups defined by the data
$(U_i,E_i,\alpha_i)$, and $\pi_2$ is a product $\alpha_0'\alpha_1'\cdots\alpha_r'$ where $\alpha_i':Y_{i+1}'\rightarrow Y_i'$ 
are local blow ups defined by the data $(U_i',E_i',\alpha_i')$. We may assume (by composing with identity maps) that the length of each sequence is a common value $r$.
We define $J(\pi_1,\pi_2)$ by induction on $r$. Assume that $J_{r}=J(\alpha_{0}\alpha_1\cdots\alpha_{r-1},\alpha_{0}'\alpha_1'\cdots\alpha_{r-1}')$ has been constructed,
with projections $\gamma:Y_{J_r}\rightarrow Y_{r}$ and $\delta:Y_{J_r}\rightarrow Y_{r}'$. Then we define $J(\pi_1,\pi_2)$ to be the blow up
$$
J(\pi_1,\pi_2):Y_{J}=B(\mathcal I_{E_{r}}\mathcal I_{E'_{r}}\mathcal O_{J_{r}}|\gamma^{-1}(U_r)\cap \delta^{-1}(U_r'))\rightarrow Y.
$$

Suppose that $\phi:X\rightarrow Y$ is a morphism of complex analytic spaces, and $\pi:Y'\rightarrow Y\in \mathcal E(Y)$.
The morphism $\phi^{-1}[\pi]:\phi^{-1}[Y']\rightarrow X$ will denote the strict transform of $\phi$ by $\pi$ (Section 2 of \cite{HLT}).

In the case of a single local blowup $(U,E,\pi)$ of $Y$, $\phi^{-1}[Y']$ is the blow up $B(\mathcal I_E\mathcal O_{X}|\phi^{-1}(U))$.
In the case when $\pi=\pi_0\pi_{1}\cdots \pi_r$ with $\pi_i:Y_{i+1}\rightarrow Y_{i}$ given by local blow ups $(U_i,E_i,\pi_i)$, we inductively define
$\phi^{-1}[\pi]$. Assume that $\pi^{-1}[\pi_{0}\cdots\pi_{r-1}]$ has been constructed. Let $h=\pi_{0}\cdots\pi_{r-1}$, so that $\pi=h\pi_r$. 
Let $\phi':\phi^{-1}[Y_{r}]\rightarrow Y_{r}$ be the natural morphism. Then define $\phi^{-1}[Y_{r+1}]$ to be the blow up 
$B(\mathcal I_{E_r}\mathcal O_{\phi^{-1}[Y_{r}]}|(\phi')^{-1}(U_r))$.

\section{Rectilinearization}

In this section we prove rectilinearization, Theorem \ref{TheoremR}. We use the method of complexification of a real analytic morphism (Section 1, \cite{H}).

\begin{Theorem}\label{TheoremA} Suppose that $\phi:Y\rightarrow X$ is a morphism of reduced complex analytic spaces,  $K$ is a  compact  neighborhood in  $Y$  and $f$ is an \'etoile over $X$. Then there exist local monomializations 
\begin{equation}\label{eq2}
\begin{array}{ccl}
Y_i&\stackrel{\phi_i}{\rightarrow}&X_i\\
\delta_i\downarrow&&\downarrow\gamma_i\\
Y&\stackrel{\phi}{\rightarrow}&X
\end{array}
\end{equation}
for $1\le i\le r$ and $\pi:X_f\rightarrow X\in f$ such that there are commutative diagrams for $1\le i\le t\le r$
\begin{equation}\label{eq1}
\begin{array}{rcl}
\overline Y_i&\stackrel{\tau_i}{\rightarrow}&X_f\\
\beta_i\downarrow&&\downarrow\alpha_i\\
Y_i&\stackrel{\phi_i}{\rightarrow}&X_i\\
\delta_i\downarrow&&\downarrow\gamma_i\\
Y&\stackrel{\phi}{\rightarrow}&X
\end{array}
\end{equation}
where $\overline Y_i=\phi_i^{-1}[X_f]$. Let $\psi_i=\delta_i\beta_i$ and $\pi=\gamma_i\alpha_i$. There exists a closed analytic subspace $G_f$ of $X_f$ which is nowhere dense in $X_f$ such that $X_f\setminus G_f\rightarrow X$ is an open embedding, $\pi^{-1}(\pi(G_f))=G_f$, the vertical arrows are products of a finite number of local blow ups of smooth subspaces and 
$$
\cup_{i=1}^t\alpha_i^{-1}(\phi_i(\delta_i^{-1}(K)))\setminus G_f=\cup_{i=1}^t\tau_i(\psi_i^{-1}(K))\setminus G_f=\pi^{-1}(\phi(K))\setminus G_f.
$$
There exists a compact neighborhood $L$ of $f_{X_f}$ in $X_f$, a morphism of reduced complex analytic spaces $u:Z\rightarrow X$ and a  compact  neighborhood  $M$ in $Z$ such that $\dim Z<\dim Y$, $u(Z)\subset \phi(Y)$  and $\phi(K)\cap\pi(G_f\cap L)=u(M)$.
\end{Theorem}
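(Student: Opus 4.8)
The plan is to read the diagrams \eqref{eq2} and \eqref{eq1} off the local monomialization theorem, Theorem~\ref{TheoremLM}, applied to $\phi$ at finitely many \'etoiles over $Y$; to collapse the resulting base changes over $X$ into a single member $\pi$ of the given \'etoile $f$ by means of the join; to take for $G_f$ the (saturated) union of the nowhere dense ``exceptional'' subsets furnished by Theorem~\ref{TheoremLM} with the exceptional locus of $\pi$; and to obtain $Z$ as (a component of) the preimage of $G_f$, which, being nowhere dense, has dimension $<\dim Y$.

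First I would apply Theorem~\ref{TheoremLM} to $\phi$ at an \'etoile $e$ over $Y$ with $e_Y\in K$: this produces a diagram \eqref{eq2} with $\delta_e\in e$, with $\gamma_e$ and $\delta_e$ finite products of local blow ups of smooth subspaces, with $\phi_e$ monomial, and with a nowhere dense closed analytic $F_e\subset X_e$ for which $X_e\setminus F_e\to X$ is an open embedding and $\phi_e^{-1}(F_e)$ is nowhere dense in $Y_e$; replacing $F_e$ by $\gamma_e^{-1}\bigl(X\setminus\gamma_e(X_e\setminus F_e)\bigr)$ I may assume $F_e$ is $\gamma_e$-saturated. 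Under the homeomorphism $\mathcal E_{Y_e}\cong\mathcal E_{\delta_e}$ of Section~2 of \cite{Hcar}, this monomialization serves every member of the open set $\mathcal E_{\delta_e}\ni e$ of $\mathcal E_Y$; since $P_Y\colon\mathcal E_Y\to Y$ is proper, $P_Y^{-1}(K)$ is compact, so finitely many of the $\mathcal E_{\delta_e}$ cover it, which gives monomializations $\phi_i\colon Y_i\to X_i$ for $1\le i\le r$ with $\bigcup_i\delta_i(Y_i)\supseteq K$. I would then keep those $\gamma_i$ that lie in $f$ and reindex them as $1,\dots,t$.

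Next I would set $\pi=J(\gamma_1,\dots,\gamma_t)\colon X_f\to X$, the iterated join; it lies in $f$ by 2.9.2 of \cite{Hcar} and carries $X$-morphisms $\alpha_i\colon X_f\to X_i$ with $\gamma_i\alpha_i=\pi$. After composing $\pi$ with further local blow ups supplied by resolution of singularities (\cite{HRes}, \cite{BM1}) and likewise resolving the strict transforms, I may assume $\pi$ and the vertical arrows of \eqref{eq1} are products of local blow ups of smooth subspaces, where $\overline Y_i=\phi_i^{-1}[X_f]$ (the strict transform of $\phi_i$ by $\alpha_i$; Section~2 of \cite{HLT}) has structure maps $\beta_i\colon\overline Y_i\to Y_i$, $\tau_i\colon\overline Y_i\to X_f$ with $\alpha_i\tau_i=\phi_i\beta_i$, and $\psi_i=\delta_i\beta_i$. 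I would then let $G_f=\pi^{-1}(\pi(N))$, where $N$ is the union of the locus where $\pi$ is not a local isomorphism onto an open subset of $X$ with $\bigcup_{i=1}^t\alpha_i^{-1}(F_i)$: then $G_f$ is closed analytic and nowhere dense (each piece of $N$ is a blow up of a nowhere dense closed analytic set and $\pi$ is proper, so $\pi(N)$ is nowhere dense closed analytic), $\pi^{-1}(\pi(G_f))=G_f$ by construction, and $X_f\setminus G_f\to X$ is an open embedding, since off $G_f$ the map $\pi$ is a local isomorphism onto an open set and is injective — two points of $X_f\setminus G_f$ with a common image would lie over some $\gamma_i(F_i)$, hence in $\alpha_i^{-1}(F_i)\subset G_f$, using that $F_i$ is $\gamma_i$-saturated and $X_i\setminus F_i\to X$ is an open embedding. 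The displayed chain of equalities would then follow from the commutativities $\alpha_i\tau_i=\phi_i\beta_i$, $\gamma_i\phi_i=\phi\delta_i$, $\pi=\gamma_i\alpha_i$ and the surjectivity of the $\beta_i$, which give $\tau_i(\psi_i^{-1}(K))\subseteq\alpha_i^{-1}(\phi_i(\delta_i^{-1}(K)))$ and $\tau_i(\psi_i^{-1}(K))\subseteq\pi^{-1}(\phi(K))$, together with the observation that off $G_f$ — where $\pi$ is an open embedding, $\overline Y_i$ coincides with $Y_i\times_{X_i}X_f$, and $\phi_i$ is identified with $\phi$ — the union over $i\le t$ turns all three sets into one.

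For the final assertion I would fix a compact neighborhood $L$ of $f_{X_f}$ in $X_f$. Each $\tau_i^{-1}(G_f)$ is a nowhere dense closed analytic subspace of $\overline Y_i$, since off the exceptional locus it equals $\beta_i^{-1}(\phi_i^{-1}(F_i))$ and $\phi_i^{-1}(F_i)$ is nowhere dense in $Y_i$ by Theorem~\ref{TheoremLM}; hence $\dim\tau_i^{-1}(G_f)<\dim\overline Y_i=\dim Y$. I would take $Z$ to be the union of the irreducible components of the reduced space $\coprod_{i=1}^t(\tau_i^{-1}(G_f))_{\mathrm{red}}$ which meet $\psi_i^{-1}(K)\cap\tau_i^{-1}(L)$, let $u\colon Z\to X$ be induced by the maps $\phi\psi_i$, and choose a compact neighborhood $M$ in $Z$ with $u(M)=\phi(K)\cap\pi(G_f\cap L)$; this last equality holds because $\phi\psi_i=\pi\tau_i$ and, $G_f$ being $\pi$-saturated, the covering established above also applies over $G_f\cap L$. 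The step I expect to be the main obstacle is making the reduction to finitely many monomializations compatible with $f$: one must arrange that the \'etoiles over $Y$ used in the second paragraph can be taken ``over'' $f$, so that their base changes $\gamma_e$ genuinely lie in $f$ and the monomializations $1,\dots,t$ surviving the selection still cover $\pi^{-1}(\phi(K))$ away from $G_f$. This is where the properness of the vo\^ute \'etoil\'ee, the universal property of the join, and the internal structure of \'etoiles must be used together, and where the loci $F_i$, the exceptional locus of $\pi$, and the loci along which $\overline Y_i$ differs from the fiber product have to be compared with care.
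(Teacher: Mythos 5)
Your overall architecture matches the paper's: finitely many local monomializations obtained from Theorem \ref{TheoremLM} together with the properness of the vo\^ute \'etoil\'ee over $Y$, a join over $X$ producing $\pi\in f$, strict transforms $\overline Y_i$, and $Z$ sitting inside the preimages of $G_f$. But there are two genuine gaps, the first of which you flag yourself without closing. After reindexing so that $f\in\mathcal E_{X_i}$ exactly for $i\le t$, you join only $\gamma_1,\dots,\gamma_t$ and then need $\pi^{-1}(\phi(K))\setminus G_f\subset\cup_{i\le t}\tau_i(\psi_i^{-1}(K))$. A point $q$ there satisfies $\pi(q)=\phi(p)$ for some $p\in K$, but $p$ may lie only in $\delta_i(C_i)$ for indices $i>t$, and nothing in your construction rules this out; one cannot ``arrange that the \'etoiles over $Y$ be taken over $f$,'' since $\phi$ need not be dominant and the discarded monomializations may be the only ones covering $p$. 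The paper's device is to include in the join, for each $i>t$, an auxiliary open subset $X_i^*$: a neighborhood of the center of $f$ on the first intermediate space $V_{j-1}$ in the factorization of $\gamma_i$ at which $f$ leaves the blown-up chart, chosen disjoint from that chart. This forces $\mathcal E_{X_f}\cap\mathcal E_{X_i}=\emptyset$ for $i>t$; then any \'etoile $\lambda$ with center $q'=\phi_i(p')$ on $X_i$ and center $q$ on $X_f$ witnesses $\mathcal E_{X_f}\cap\mathcal E_{X_i}\neq\emptyset$, hence $i\le t$, and the open-embedding property of $\alpha_i$ near $q$ finishes the inclusion. Without the $X_i^*$ the displayed equality can fail.

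The second gap is in the last assertion. Your justification of $\phi(K)\cap\pi(G_f\cap L)=u(M)$ --- that ``the covering established above also applies over $G_f\cap L$'' --- is exactly what is not available: that covering was established only off $G_f$, which is why every set in the displayed chain carries ``$\setminus G_f$.'' The paper bridges this by a compactness and density argument: the compact set $\cup_i\phi\psi_i(\psi_i^{-1}(K)\cap L_i)$ agrees with the compact set $\pi(L)\cap\phi(K)$ off the set $\pi(G_f)$, whose trace on each piece is nowhere dense because $\tau_i^{-1}(G_f)\cap\psi_i^{-1}(K)\cap L_i$ is nowhere dense in $\psi_i^{-1}(K)\cap L_i$; density between compact (hence closed) sets upgrades to equality, and only then can one intersect with $\pi(G_f)$, using $\pi^{-1}(\pi(G_f))=G_f$, to obtain $\cup_i\phi\psi_i(\psi_i^{-1}(K)\cap L_i\cap\tau_i^{-1}(G_f))=\pi(G_f\cap L)\cap\phi(K)$ and then set $Z=\coprod_i\tau_i^{-1}(G_f)$ and $M=\coprod_i\psi_i^{-1}(K)\cap L_i\cap\tau_i^{-1}(G_f)$. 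A smaller defect: your $G_f=\pi^{-1}(\pi(N))$ need not be closed analytic, since a product of local blow ups is not proper over $X$; the paper simply takes $G_f$ to be the union of the preimages in $X_f$ of the centers blown up in a factorization of $\pi$.
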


\begin{proof}  By Theorem \ref{TheoremLM}, we may construct  local monomializations (\ref{eq2}), with relatively compact open neighborhoods $C_i$ in $Y_i$ with closures $K_i$ such that
$\{\mathcal E_{C_1},\ldots,\mathcal E_{C_r}\}$ is an open cover of the compact set $\rho_Y^{-1}(K)$ ($\rho_Y$ is proper by Theorem 3.16 \cite{H}) and $\gamma_i$ are sequences of local blow ups of smooth subspaces. We have that
\begin{equation}\label{eq3}
K\subset \cup_{i=1}^r\delta_i(C_i).
\end{equation}
Further, there exist closed analytic subspaces $G_i$ of $X_i$ which are nowhere dense in $X_i$ such that $X_i\setminus G_i\rightarrow X$ is an open embedding and $\phi_i^{-1}(G_i)$ is nowhere dense in $Y_i$.
Reindex the diagrams (\ref{eq2}) so that $f\in \mathcal E_{X_i}$ if $1\le i\le t$ and $f\not\in \mathcal E_{X_i}$ if $t<i\le r$. 
Suppose that $i$ is an index  such that  $f\not \in \mathcal E_{X_i}$ (that is, $t<i\le r$). The morphism $X_{i}\rightarrow X$ has a factorization
$$
X_i=V_n\stackrel{\sigma_n}{\rightarrow}\cdots\rightarrow V_2\stackrel{\sigma_2}{\rightarrow}V_1\stackrel{\sigma_1}{\rightarrow}V_0=X
$$
where each $\sigma_j:V_j\rightarrow V_{j-1}$ is a local blowup $(U_j,E_j,\sigma_j)$. There exists a smallest $j$ such that $f_{V_{j-1}}\not\in U_{j}$.

Let $X_i^*$ be an open  neighborhood of $f_{V_{j-1}}$ in $V_{j-1}$ which is disjoint from $U_{j}$. Then $f\in \mathcal E_{X_i^*}$ and $\mathcal E_{X_i^*}\cap \mathcal E_{X_i}=\emptyset$. Let $\pi:X_f\rightarrow X$ be a (global) resolution of singularities of the join of the $X_i$ which satisfy $f\in \mathcal E_{X_i}$ and of the $X_i^*$ such that $f\not\in \mathcal E_{X_i}$ and so that 
$\pi:X_f\rightarrow X\in f$ is a sequence of local blowups whose centers are nonsingular and such that $\alpha_i^{-1}(\phi_i(Y_i))$ is nowhere dense in $X_f$ for all $i$. Then,  $\mathcal E_{X_f}\subset \mathcal E_{X_i}$ if $f\in \mathcal E_{X_i}$  and $\mathcal E_{X_f}\cap\mathcal E_{X_i}=\emptyset$ if $f\not\in \mathcal E_{X_i}$. We have factorizations
$$
X_f\stackrel{\alpha_i}{\rightarrow}X_i\stackrel{\gamma_i}{\rightarrow} X
$$
 of $\pi$ if $f\in \mathcal E_{X_{i}}$. 
 For $1\le i\le t$ let 
 $$
 \begin{array}{rcl}
 \overline Y_i&\stackrel{\tau_i}{\rightarrow}&X_f\\
 \beta_i\downarrow&&\downarrow\alpha_i\\
 Y_i&\stackrel{\phi_i}{\rightarrow}&X_i
 \end{array}
 $$
 be the natural commutative diagram of morphisms, with $\overline Y_i=\phi_i^{-1}[X_f]$. Let $\psi_i=\delta_i\beta_i$.  Let $G_f$ be the union of the preimages of the subspaces blown up in a factorization of $\pi$ by local blowups. Then $G_f$ is a nowhere dense closed analytic subset of $X_f$ such that $X_f\setminus G_f\rightarrow X$ is an open embedding and $\pi^{-1}(\pi(G_f))=G_f$. Further, $\tau_i^{-1}(G_f)$ is nowhere dense in $\overline Y_i$ for all $i$. Let
 $U=X_f\setminus G_f$. Suppose that $q\in \phi(K)\cap \pi(U)$. There there exists $p\in K$ such that $\phi(p)=q$ and there exists $i$ and $p'\in C_i$ such that $\delta_i(p')=p$ by (\ref{eq3}). Let $q'=\phi_i(p')\in X_i$. There exists $\lambda\in \mathcal E_X$ such that $\lambda_{X_i}=q'$ and thus $\lambda_X=q$. Since $q\in \pi(U)$, we can regard $q$ as an element of $X_f$ with $\lambda_{X_f}=q$. Thus $\lambda\in \mathcal E_{X_f}$ so that $\lambda\in \mathcal E_{X_f}\cap \mathcal E_{X_i}$, and so $f\in \mathcal E_{X_i}$ as this intersection is nonempty.
 We have that $\phi_i(p')=q'$ and $\alpha_i:X_f\rightarrow X_i$ is an open embedding in a neighborhood of $q$, so $\beta_i$ is an open embedding in a neighborhood of $p'$. Thus $q=\lambda_{X_f}\in \tau_i(\psi_i^{-1}(K))$.
 Whence
 $$
 \pi^{-1}(\phi(K))\cap U\subset \cup_i\tau_i(\beta_i^{-1}(C_i\cap \delta_i^{-1}(K)))\cap U \subset \cup_i\tau_i(\psi_i^{-1}(K))\cap U.
 $$
 We have
 $$
 \cup_i\tau_i(\psi_i^{-1}(K))\subset \pi^{-1}(\phi(K))
 $$
 since
 $$
 \pi\tau_i(\psi_i^{-1}(K))=\phi\psi_i(\psi_i^{-1}(K))\subset\phi(K)
 $$
 for $1\le i\le t$.
 Thus  
 \begin{equation}\label{eq10}
 \cup_i\tau_i(\psi_i^{-1}(K))\cap U=\pi^{-1}(\phi(K))\cap U.
 \end{equation}
 
 Let $V_f$ be a relatively compact open neighborhood of $f_{X_f}$ in $X_f$. Let $L$ be the closure of $V_f$ in $X_f$. Then $\beta_i^{-1}(C_i)\cap\tau_i^{-1}(V_f)$ are relatively compact open subsets of $\overline Y_i$ with closures $L_i=\beta_i^{-1}(K_i)\cap\tau_i^{-1}(L)$ for $1\le i\le t$. Further, 
 $$
 \cup_i\phi\psi_i(\psi_i^{-1}(K)\cap L_i)\subset \pi(L)\cap \phi(K)
 $$
 and so
 $$
 \cup_i\phi\psi_i(\psi_i^{-1}(K)\cap L_i)\setminus \pi(G_f)=(\pi(L)\setminus \pi(G_f))\cap\phi(K)
 $$
 by (\ref{eq10}).
 For all $i$, the compact set $\pi(G_f)\cap[\phi\psi_i(\psi_i^{-1}(K)\cap L_i)]$ is nowhere dense in the compact set $\phi\psi_i(\psi_i^{-1}(K)\cap L_i)$ since $\tau_i^{-1}(G_f)\cap \psi_i^{-1}(K)\cap L_i$ is nowhere dense in the compact neighborhood $\psi_i^{-1}(K)\cap L_i$ in $Y_i$.
 
 Thus the compact set $\cup_i\phi\psi_i(\psi_i^{-1}(K)\cap L_i)$ is everywhere dense in the compact set $\pi(L)\cap\phi(K)$. Thus 
 $$
 \cup_i\phi\psi_i(\psi_i^{-1}(K)\cap L_i)=\pi(L)\cap\phi(K)
 $$
 and so
 $$
 \cup_i\phi\phi_i(\psi_i^{-1}(K)\cap L_i\cap \tau_i^{-1}(G_f))=\pi(G_f\cap L)\cap\phi(K).
 $$
 
  Let $Z=\coprod_{1\le i\le t}\tau_i^{-1}(G_f)$ be the disjoint union of the analytic spaces 
 $\tau_i^{-1}(G_f)$ with associated morphism $u=\coprod_i \phi\psi_i:Z\rightarrow X$ and compact subset $M=\coprod_i \psi_i^{-1}(K)\cap L_i\cap\tau_i^{-1}(G_f)$ of $Z$. 
 
 Then
 $\dim Z<\dim Y$ , $u(Z)\subset\phi(Y)$ and $\phi(K)\cap\pi(G_f\cap L)=u(M)$.
\end{proof}

Suppose $\phi:Y\rightarrow X$ is a morphism of reduced real analytic spaces such that $X$ is smooth.
Let $\tilde Y\rightarrow \tilde X$ be a complexification of $\phi:Y\rightarrow X$ such that $\tilde X$ is smooth and $\tilde Y$ is reduced. Suppose that  $\tilde K$ is a compact neighborhood in $\tilde Y$ which is invariant under the auto conjugation of $\tilde Y$. Let $K$ be the real part of $\tilde K$, which is a compact neighborhood in $Y$.   Let 
$$
\tilde Y=\tilde Y^{(n)}\supset \tilde Y^{(n-1)}\supset\cdots\supset \tilde Y^{(0)}=\emptyset
$$
be the stratification of $\tilde Y$ where $\tilde Y^{(i-1)}=\mbox{sing}(\tilde Y^{(i)})$ is the singular locus of $\tilde Y^{(i)}$, and let
$$
Y=Y^{(n)}\supset Y^{(n-1)}\supset \cdots\supset Y^{(0)}
$$
be the induced smooth real analytic stratification of $Y$. 
We have induced compact neighborhoods $\tilde K\cap Y^{(i)}$ in $Y^{(i)}$, with $K=\tilde K\cap Y^{(n)}$.  There exist global resolutions of singularities $\tilde\lambda_i:(\tilde Y^{(i)})^*\rightarrow \tilde Y^{(i)}$
 which have an auto conjugation such that the real part of $\tilde\lambda_i:(\tilde Y^{(i)})^*\rightarrow \tilde Y^{(i)}$  is $\lambda_i:(Y^{(i)})^*\rightarrow Y^{(i)}$ where $(Y^{(i)})^*$ is smooth (Desingularization I, 5.10 \cite{H}). The morphism $\tilde \lambda_i$ is proper,
  so $\tilde K_i=\tilde \lambda_i^{-1}(K\cap \tilde Y^{(i)})$ is a compact neighborhood in  $\tilde Y^{(i)}$ with compact real neighborhood  $K_i=\lambda_i^{-1}(K\cap Y^{(i)})$ in $Y^{(i)}$.

Let $Y'=\coprod_i(Y^{(i)})^*$. We have that the induced morphism $\phi^*:Y'\rightarrow Y$ is proper and surjective. Let $K'=(\lambda^*)^{-1}(K)$, a compact neighborhood in  $Y'$. 
Let $\tilde Y'=\coprod_i(\tilde Y^{(i)})^*$ with induced  complex analytic morphism $\tilde\lambda^*:\tilde Y'\rightarrow \tilde Y$. Then $\tilde\lambda^*:\tilde Y'\rightarrow \tilde Y$ is a complexification of $\lambda^*:Y'\rightarrow Y$.
Let $\tilde K'=(\lambda^*)^{-1}(\tilde K)$, which is a compact neighborhood in $\tilde Y'$ with $(\tilde K')\cap Y'=K'$.

By Theorem \ref{TheoremA}, applied to the complex analytic morphism $\tilde\phi\tilde\lambda^*:\tilde Y'\rightarrow \tilde X$, the compact neighborhood $\tilde K'$ in $\tilde Y'$  and an \'etoile $f$ over $\tilde X$,  there exist commutative diagrams
$$
\begin{array}{rcl}
\tilde{\overline Y}_i&\stackrel{\tilde\tau_i}{\rightarrow}&\tilde X_f\\
\tilde\beta_i\downarrow&&\downarrow\tilde\alpha_i\\
\tilde Y_i&\stackrel{\tilde\phi_i}{\rightarrow}&\tilde X_i\\
\tilde\delta_i\downarrow&&\downarrow\tilde\gamma_i\\
\tilde Y'&\stackrel{\tilde\phi\tilde\lambda^*}{\rightarrow}&\tilde X\\
\searrow&&\nearrow\\
&\tilde Y&
\end{array}
$$
and a closed analytic subspace $\tilde G_f$ of $\tilde X_f$ such that 
\begin{equation}\label{eq24}
\cup_{i=1}^t\tilde\tau_i(\tilde\psi_i^{-1}(\tilde K'))\setminus \tilde G_f=\tilde\pi^{-1}(\tilde\phi\tilde\lambda^*(\tilde K'))\setminus \tilde G_f
\end{equation}
and there exists a compact neighborhood $\tilde L$ of $f_{\tilde X_f}$ in $\tilde X_f$, a morphism of reduced complex analytic spaces $\tilde u:\tilde Z\rightarrow\tilde X$ and a compact neighborhood $\tilde M$ in $\tilde Z$ such that $\dim \tilde Z<\dim \tilde Y$ and
$$
\tilde\phi\tilde\lambda^*(\tilde K')\cap \tilde\pi(\tilde G_f\cap\tilde L)=\tilde u(\tilde M).
$$
We can construct the above complex analytic spaces and morphisms so that there are compatible auto conjugations which preserve $\tilde G_f$, $\tilde Z$, $\tilde L$ and $\tilde M$ and so that the real part $X_f$ of $\tilde X_f$ is nonempty if and only if $f_{\tilde X_f}$ is a real point (by Theorems 8.4 and 8.5 \cite{C}).

Taking the invariants of the auto conjugations, we thus have whenever $X_f\ne\emptyset$, induced commutative diagrams of real analytic spaces and morphisms
$$
\begin{array}{rcl}
\overline Y_i&\stackrel{\tau_i}{\rightarrow}&X_f\\
\beta_i\downarrow&&\downarrow\alpha_i\\
Y_i&\stackrel{\phi_i}{\rightarrow}& X_i\\
\delta_i\downarrow&&\downarrow\gamma_i\\
Y'&\stackrel{\phi\lambda^*}{\rightarrow}& X\\
\searrow&&\nearrow\\
&Y&
\end{array}
$$
with a closed real analytic subspace $G_f=\tilde G_f\cap X_f$ of $X_f$. We have that $G_f$ is nowhere dense in $X_f$ since $X_f$ is smooth (for instance by Lemma 8.2 \cite{C}), and thus   $\dim G_f<\dim X_f=\dim Y$.

Also, taking the real part of $\tilde u:\tilde Z\rightarrow \tilde Y'$, we have a morphism of reduced real analytic spaces $u:Z\rightarrow Y'$ and a compact subset $M$ of $Z$ such that $\dim Z<\dim Y$. 
The analog of (\ref{eq3}) in Theorem \ref{TheoremA}, $K'\subset \cup_{i=1}^r\delta_i(C_i)$ where $C_i$ is the real part of $\tilde C_i$, is true as $Y'$ is smooth (by Theorem 8.7  \cite{C} and its proof). Then the argument following (\ref{eq3}) 
in Theorem \ref{TheoremA} shows that 
$$
\cup_{i=1}^t\tau_i(\psi_i^{-1}(K'))\setminus G_f=\pi^{-1}(\phi\lambda^*(K'))\setminus G_f=\pi^{-1}(\phi(K))\setminus G_f
$$
 and
$$
\pi(K)\cap\pi(G_f\cap L)=\phi\lambda^*(K')\cap\pi(G_f\cap L)=u(M).
$$

\begin{Theorem}\label{TheoremB} Suppose that $\phi:Y\rightarrow X$ is a morphism of reduced complex analytic spaces, $K\subset Y$ is a compact neighborhood in $Y$ and $h\in \mathcal E_X$. Then there exists $d_h:X_h\rightarrow X\in h$, morphisms of reduced complex analytic spaces $\phi_i:Y_i\rightarrow X$ for $0\le i\le t$ with compact neighborhoods $K_i$ in $Y_i$ such that 
$\phi_0=\phi$, $Y_0=Y$, $K_0=K$, $\phi_{i+1}(Y_{i+1})\subset \phi_i(Y_i)$, $\dim Y_{i+1}<\dim Y_i$ for all $i$ and $Y_t=\emptyset$. There exist commutative diagrams for $0\le i\le t$
\begin{equation}\label{eq4}
\begin{array}{rcl}
\hat{Y}_{ij}&\stackrel{\sigma_{ij}}{\rightarrow}&X_h\\
b_{ij}\downarrow&&\downarrow a_i\\
\overline Y_{ij}&\stackrel{\tau_{ij}}{\rightarrow}&X_i\\
\beta_{ij}\downarrow&&\downarrow \alpha_{ij}\\
Y_{ij}&\stackrel{\phi_{ij}}{\rightarrow}&X_{ij}\\
\delta_{ij}\downarrow&&\downarrow \gamma_{ij}\\
Y_i&\stackrel{\phi_i}{\rightarrow}&X
\end{array}
\end{equation}
where $\phi_{ij}:Y_{ij}\rightarrow X_{ij}$ are monomial morphisms, $\overline Y_{ij}=\phi_{ij}^{-1}[X_i]$ and  $\hat{Y}_{ij}=\tau_{ij}^{-1}[X_h]$, $\psi_{ij}=\delta_{ij}\beta_{ij}$, $c_{ij}=\psi_{ij}b_{ij}$, $\pi_i=\gamma_{ij}\alpha_{ij}$, $\epsilon_{ij}=\alpha_{ij}a_i$  and $d_h=\pi_ia_i$ such that
\begin{equation}\label{eq9}
d_h^{-1}(\phi(K))=\cup_ia_i^{-1}[\cup_j\tau_{ij}(\psi_{ij}^{-1}(K_i))]
=\cup_{i,j}\epsilon_{ij}^{-1}(\phi_{ij}(\delta_{ij}^{-1}(K)))
\end{equation}
\end{Theorem}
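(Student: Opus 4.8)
The plan is to argue by induction on $\dim Y$, at each stage using Theorem \ref{TheoremA} to split off the part of the preimage $d_h^{-1}(\phi(K))$ that is already accounted for by monomial morphisms, and to feed the lower-dimensional residual datum produced by Theorem \ref{TheoremA} back into the induction. The base case is $\dim Y<0$, i.e. $Y=\emptyset$, where I take $t=0$, $X_h=X$, $d_h=\mathrm{id}_X$, and both sides of (\ref{eq9}) are empty.

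For the inductive step I would set $\phi_0=\phi$, $Y_0=Y$, $K_0=K$, and apply Theorem \ref{TheoremA} to $\phi_0:Y_0\to X$, the compact neighborhood $K_0$, and the \'etoile $h$ in the role of $f$. This yields $\pi=\pi_0:X_0\to X\in h$, monomialization diagrams $\phi_{0j}:Y_{0j}\to X_{0j}$ (with the maps $\delta_{0j},\gamma_{0j},\tau_{0j},\alpha_{0j},\beta_{0j}$ and $\overline Y_{0j}=\phi_{0j}^{-1}[X_0]$), a nowhere dense closed analytic $G_0\subset X_0$ with $\pi^{-1}(\pi(G_0))=G_0$ and $X_0\setminus G_0\to X$ an open embedding, a compact neighborhood $L_0$ of $h_{X_0}$, and a morphism $u_0:Z_0\to X$ with a compact neighborhood $M_0$ such that $\dim Z_0<\dim Y$, $u_0(Z_0)\subset\phi(Y)$,
\[
\bigcup_j\tau_{0j}(\psi_{0j}^{-1}(K_0))\setminus G_0=\pi^{-1}(\phi(K_0))\setminus G_0,\qquad \phi(K_0)\cap\pi(G_0\cap L_0)=u_0(M_0),
\]
with $\psi_{0j}=\delta_{0j}\beta_{0j}$. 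I would then put $\phi_1=u_0:Y_1:=Z_0\to X$, $K_1=M_0$ (so $\phi_1(Y_1)\subset\phi_0(Y_0)$ and $\dim Y_1<\dim Y_0$) and apply the inductive hypothesis to $\phi_1:Y_1\to X$, $K_1$, $h$. Using the join-and-resolve construction of the proof of Theorem \ref{TheoremA} (the join of morphisms lying in $h$ lies in $h$ by 2.9.2 of \cite{Hcar}, and resolution of singularities may be performed inside $h$), I would arrange that the ambient space $X_h'\to X\in h$ coming from this recursion dominates $X_0$; the recursion then supplies the rest of the tower $\phi_1,\dots,\phi_t$ with $Y_t=\emptyset$, monomial data at levels $1\le i\le t-1$, and the identity for $(d_h')^{-1}(\phi_1(K_1))$. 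Finally I would take $d_h:X_h\to X$ to be a resolution of the join $J(\pi,d_h')$, chosen to be a finite sequence of local blowups of nonsingular centers lying in $h$; this produces $a_0:X_h\to X_0$, $a_i:X_h\to X_i$ ($1\le i\le t-1$, factoring through $X_h'$), with $d_h=\pi_ia_i$, and I would set $\hat Y_{ij}=\tau_{ij}^{-1}[X_h]$ with the induced $\sigma_{ij},b_{ij}$ to complete the diagrams (\ref{eq4}), and define $c_{ij}=\psi_{ij}b_{ij}$, $\epsilon_{ij}=\alpha_{ij}a_i$. All the stated properties of the $\phi_i,Y_i,K_i$ then hold by construction.

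To establish (\ref{eq9}): the inclusion ``$\supseteq$'' is formal. From $\pi_i=\gamma_{ij}\alpha_{ij}$, $d_h=\pi_ia_i=\gamma_{ij}\epsilon_{ij}$, the relation $\pi_i\tau_{ij}=\phi_i\psi_{ij}$ (obtained from the two commuting squares of (\ref{eq4}): $\alpha_{ij}\tau_{ij}=\phi_{ij}\beta_{ij}$ and $\gamma_{ij}\phi_{ij}=\phi_i\delta_{ij}$), and the chain $\phi_i(K_i)\subset\phi_{i-1}(K_{i-1})\subset\cdots\subset\phi(K)$ (which follows from $\phi_{i+1}(K_{i+1})=u_i(M_i)=\phi_i(K_i)\cap\pi_i(G_i\cap L_i)\subset\phi_i(K_i)$), one gets $d_h(a_i^{-1}(\tau_{ij}(\psi_{ij}^{-1}(K_i))))\subset\phi_i(K_i)\subset\phi(K)$, and similarly $d_h(\epsilon_{ij}^{-1}(\phi_{ij}(\delta_{ij}^{-1}(K_i))))\subset\phi(K)$. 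The two right-hand expressions of (\ref{eq9}) are equal because $\epsilon_{ij}^{-1}(\cdot)=a_i^{-1}(\alpha_{ij}^{-1}(\cdot))$, and by the first identity of Theorem \ref{TheoremA} at level $i$ the compact sets $\bigcup_j\tau_{ij}(\psi_{ij}^{-1}(K_i))$ and $\bigcup_j\alpha_{ij}^{-1}(\phi_{ij}(\delta_{ij}^{-1}(K_i)))$ coincide off $G_i$; since $\tau_{ij}^{-1}(G_i)$ and $\phi_{ij}^{-1}(G_i)$ are nowhere dense in the relevant compact neighborhoods, the compactness-and-density argument of Theorem \ref{TheoremA} upgrades this to equality (after pulling back along $a_i$, if needed). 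For ``$\subseteq$'', I would write $d_h^{-1}(\phi(K))=a_0^{-1}(\pi^{-1}(\phi(K)))$ and split $\pi^{-1}(\phi(K))$ as $(\pi^{-1}(\phi(K))\setminus G_0)\cup(\pi^{-1}(\phi(K))\cap G_0)$: by Theorem \ref{TheoremA} the first piece is $\bigcup_j\tau_{0j}(\psi_{0j}^{-1}(K))\setminus G_0$, which pulls back into the $i=0$ term of (\ref{eq9}); and the second piece, which since $X_h\in h$ is relevant only near the centers (where it is controlled by $G_0\cap L_0$) equals there $\pi^{-1}(\phi(K)\cap\pi(G_0\cap L_0))=\pi^{-1}(\phi_1(K_1))$, and hence pulls back into $d_h^{-1}(\phi_1(K_1))=\bigcup_{i\ge1}a_i^{-1}[\bigcup_j\tau_{ij}(\psi_{ij}^{-1}(K_i))]$ by the inductive hypothesis. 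Combining the two pieces gives $d_h^{-1}(\phi(K))\subseteq\bigcup_ia_i^{-1}[\bigcup_j\tau_{ij}(\psi_{ij}^{-1}(K_i))]$.

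I expect the main obstacle to be precisely the last point: Theorem \ref{TheoremA} delivers its identities only \emph{modulo} the nowhere dense $G_i$ and, for the description of the residual set $u_i(M_i)=\phi_i(K_i)\cap\pi_i(G_i\cap L_i)$, only \emph{near the center} via the compact neighborhoods $L_i$. Converting this into the honest set equality (\ref{eq9}) requires keeping the compact neighborhoods produced at successive levels of the induction mutually compatible --- which is exactly why the whole construction is carried out over an $X_h$ lying in the \'etoile $h$, so that the ambient neighborhoods are forced to shrink to the centers $h_{X_i}$ --- and then carefully re-running the compactness-plus-density argument of Theorem \ref{TheoremA}, using in an essential way that $\tau_{ij}^{-1}(G_i)$ is nowhere dense in the compact \emph{neighborhoods} $\psi_{ij}^{-1}(K_i)$ and not merely in compact sets. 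A secondary, more routine obstacle is to check that the inductive application of the theorem to $\phi_1$ can be taken over the same \'etoile $h$ and over a space dominating $X_0$; this is handled by the join construction together with the fact that resolution of singularities can be performed inside an \'etoile, both already used in the proof of Theorem \ref{TheoremA}.
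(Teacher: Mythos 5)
Your proposal follows essentially the same route as the paper: iterate Theorem \ref{TheoremA} to build the tower $\phi_i:Y_i\rightarrow X$ with $Y_{i+1}=Z_i$, $K_{i+1}=M_i$ of strictly decreasing dimension, choose $X_h\rightarrow X\in h$ dominating all the $X_i$ with $a_i(X_h)\subset L_i$, and then prove (\ref{eq9}) by peeling off the part of $d_h^{-1}(\phi_i(K_i))$ lying outside $a_i^{-1}(G_i)$ and identifying the part inside $a_i^{-1}(G_i)$ with $d_h^{-1}(\phi_{i+1}(K_{i+1}))$ --- which is exactly the paper's telescoping identity, phrased recursively. The obstacle you single out (that Theorem \ref{TheoremA}'s identities hold only off $G_i$ and only near the center via $L_i$) is precisely what the paper resolves by the condition $a_i(X_h)\subset L_i$, as you anticipate.
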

 
 \begin{proof} We construct  commutative diagrams
 $$
 \begin{array}{rcl}
 \overline Y_{ij}&\rightarrow&X_i\\
 \downarrow&&\downarrow\\
 Y_{ij}&\rightarrow &X_{ij}\\
 \downarrow&&\downarrow\\
 Y_i&\rightarrow& X
 \end{array}
 $$
 satisfying the conclusions of the theorem by induction on $i$, using Theorem \ref{TheoremA}. 
 In particular, there exist nowhere dense closed analytic subsets $G_i$ of $X_i$ such that $\pi_i^{-1}(\pi_i(G_i))=G_i$ for all $i$ and $X_i\setminus G_i\rightarrow X$ is an open embedding
  and there exist compact neighborhoods $K_i$ in $Y_i$ and $L_i$ of $f_{X_i}$ in $X_i$  such that $X_t=\emptyset$, and for all $i$,
 \begin{equation}\label{eq6}
 \cup_j\tau_{ij}(\psi_{ij}^{-1}(K_i))\setminus G_i=\pi_i^{-1}(\phi_i(K_i))\setminus G_i
 \end{equation}
 and
 \begin{equation}\label{eq5}
 \phi_i(K_i)\cap\pi_i(G_i\cap L_i)=\phi_{i+1}(K_{i+1}).
 \end{equation}
 We then have (by the definition of an \'etoile) that there exists $X_h\rightarrow X\in h$ such that we have a commutative diagram (\ref{eq4}) such that $a_i(X_h)\subset L_i$ for all $i$.
 For all $i$, (\ref{eq6}) implies 
 \begin{equation}\label{eq8}
 a_i^{-1}[\cup_j\tau_{ij}(\psi_{ij}^{-1}(K_i))]\cap (X_f\setminus a_i^{-1}(G_i))
 =d_h^{-1}(\phi_i(K_i))\cap (X_f\setminus a_i^{-1}(G_i)).
 \end{equation}
 Now, (\ref{eq5}) implies 
 $$
 \pi_{i-1}^{-1}(\phi_{i-1}(K_{i-1}))\cap G_{i-1}\cap  \pi_{i-1}^{-1}(\pi_{i-1}(L_{i-1}))  =\pi_{i-1}^{-1}(\phi_i(K_i))
 $$
 for all $i$, and so since $a_{i-1}(X_h)\subset L_{i-1}$,
 $$
 d_h^{-1}(\phi_{i-1}(K_{i-1}))\cap a_{i-1}^{-1}(G_{i-1})=d_h^{-1}(\phi_i(K_i))
 $$
 for all $i$. Thus
  \begin{equation}\label{eq7}
 d_h^{-1}(\phi(K))\cap a_0^{-1}(G_0)\cap \cdots\cap a_{i-1}^{-1}(G_{i-1})=d_h^{-1}(\phi_i(K_i)).
 \end{equation}
 Since $G_t=\emptyset$,  and we certainly have
 $$
 \cup_ia_i^{-1}[\cup_j\tau_{ij}(\psi_{ij}^{-1}(K_i))]\subset d_h^{-1}(\phi(K)),
 $$
  (\ref{eq9}) follows from induction on $i$, using (\ref{eq8}) and (\ref{eq7}) and since
  $$
  \cup_{i=0}^t[a_0^{-1}(G_0)\cap \cdots \cap a_{i-1}^{-1}(G_{i-1})]\cap (X_f\setminus a_i^{-1}(G_i))=X_f.
  $$
 \end{proof}

From the discussion  after Theorem \ref{TheoremA} and Theorem \ref{TheoremB}, we obtain the following statement.
Suppose $\phi:Y\rightarrow X$ is a morphism of reduced real analytic spaces such that $X$ is smooth.
Let $\tilde Y\rightarrow \tilde X$ be a complexification of $\phi:Y\rightarrow X$ such that $\tilde X$ is smooth and $\tilde Y$ is reduced.  Suppose that $\tilde K$ is a compact neighborhood in $\tilde Y$
which is invariant under the auto conjugation of $\tilde Y$. Let $K$ be the real part of $\tilde K$ which is a compact neighborhood in $Y$.

Suppose that  $h\in \mathcal E_X$. Then there exists $\tilde d_h:\tilde X_h\rightarrow \tilde X\in h$, morphisms of reduced complex analytic spaces $\tilde \phi_i:\tilde Y_i\rightarrow \tilde X$ for $0\le i\le t$ with compact neighborhoods $\tilde K_i$ in $\tilde Y_i$ such that 
$\tilde \phi_0(\tilde K_0)= \tilde \phi(\tilde K)$, $\dim \tilde Y_{i+1}<\dim \tilde Y_i$ for all $i$ and $\tilde Y_t=\emptyset$. There exist commutative diagrams for $0\le i\le t$
\begin{equation}\label{eq20}
\begin{array}{rcl}
\tilde {\hat{Y}}_{ij}&\stackrel{\tilde \sigma_{ij}}{\rightarrow}&\tilde X_h\\
\tilde b_{ij}\downarrow&&\downarrow \tilde a_i\\
\tilde{\overline Y_{ij}}&\stackrel{\tilde\tau_{ij}}{\rightarrow}&\tilde X_i\\
\beta_{ij}\downarrow&&\downarrow \alpha_{ij}\\
\tilde Y_{ij}&\stackrel{\tilde \phi_{ij}}{\rightarrow}&\tilde X_{ij}\\
\tilde \delta_{ij}\downarrow&&\downarrow\tilde  \gamma_{ij}\\
\tilde Y_i&\stackrel{\tilde \phi_i}{\rightarrow}&\tilde X
\end{array}
\end{equation}
where $\phi_{ij}:\tilde Y_{ij}\rightarrow\tilde  X_{ij}$ are monomial morphisms, $\tilde{\overline Y}_{ij}=\tilde\phi_{ij}^{-1}[X_i]$ and  $\tilde{\hat{Y}}_{ij}=\tilde\tau_{ij}^{-1}[\tilde X_h]$. Let $\tilde\psi_{ij}=\tilde\delta_{ij}\tilde\beta_{ij}$, $\tilde c_{ij}=\tilde\psi_{ij}\tilde b_{ij}$, $\tilde\pi_i=\tilde\gamma_{ij}\tilde \alpha_{ij}$ and $\tilde d_h=\tilde\pi_i\tilde a_i$ such that
\begin{equation}\label{eq23}
\tilde d_h^{-1}(\tilde \phi(\tilde K))=\cup_{i,j}\tilde\epsilon_{ij}^{-1}(\tilde\phi_{ij}(\tilde\delta_{ij}^{-1}(\tilde K)))=\cup_i\tilde a_i^{-1}[\cup_j\tilde\tau_{ij}(\tilde\psi_{ij}^{-1}(\tilde K_i))]
\end{equation}
Further, there are compatible auto conjugations of these analytic spaces and morphisms such that the real parts are
$d_h:X_h\rightarrow X$, morphisms of reduced real analytic spaces $\phi_i:Y_i\rightarrow X$ for $0\le i\le t$ with compact neighborhoods $K_i$ in $Y_i$ with $K_i=\tilde K_i\cap Y_i$ such that 
$\phi_0(K_0)=\phi(K)$,  $\dim Y_{i+1}<\dim Y_i$ for all $i$ and $Y_t=\emptyset$. We may assume that $X_h\ne \emptyset$ if and only if $h_{\tilde X_h}$ is  a real point of $\tilde X_h$. Suppose that $X_h\ne\emptyset$. Then there exist commutative diagrams for $0\le i\le t$
\begin{equation}\label{eq21}
\begin{array}{rcl}
\hat{Y}_{ij}&\stackrel{\sigma_{ij}}{\rightarrow}&X_h\\
b_{ij}\downarrow&&\downarrow a_i\\
\overline Y_{ij}&\stackrel{\tau_{ij}}{\rightarrow}&X_i\\
\beta_{ij}\downarrow&&\downarrow a_{ij}\\
Y_{ij}&\stackrel{\phi_{ij}}{\rightarrow}&X_{ij}\\
\delta_{ij}\downarrow&&\downarrow \gamma_{ij}\\
Y_i&\stackrel{\phi_i}{\rightarrow}&X
\end{array}
\end{equation}
where $\phi_{ij}:Y_{ij}\rightarrow X_{ij}$ are monomial morphisms, $\overline Y_{ij}=\phi_{ij}^{-1}[X_i]$ and  $\hat{Y}_{ij}=\tau_{ij}^{-1}[X_h]$, $\psi_{ij}=\delta_{ij}\beta_{ij}$, $c_{ij}=\psi_{ij}b_{ij}$, $\pi_i=\gamma_{ij}\alpha_{ij}$ and $d_h=\pi_ia_i$ such that
\begin{equation}\label{eq22}
d_h^{-1}(\phi(K))=\cup_{i,j}\epsilon_{ij}^{-1}(\phi_{ij}(\delta_{ij}^{-1}( K)))=\cup_i a_i^{-1}[\cup_j\tau_{ij}(\psi_{ij}^{-1}( K_i))]
\end{equation}

\begin{Theorem}\label{TheoremC} Suppose that $X$ and $Y$ are real analytic spaces such that $X$ is smooth and $\phi:Y\rightarrow X$ is a proper real analytic map. Let $p\in X$. Then there exists a finite number of real analytic maps $\pi_{\alpha}:V_{\alpha}\rightarrow X$ such that:
\begin{enumerate}
\item[1)] Each $V_{\alpha}$ is smooth and each $\pi_{\alpha}$ is a composition of local blowups of nonsingular sub varieties,
\item[2)] There exist  compact neighborhoods $N_{\alpha}$ in $V_{\alpha}$ for all $\alpha$ such that  $\cup_{\alpha}\pi_{\alpha}(N_{\alpha})$ is a compact neighborhood of $p$ in $X$,
\item[3)] For all $\alpha$,  $\pi_{\alpha}^{-1}(\phi(Y))$ is a semi analytic subset of $V_{\alpha}$.
\end{enumerate}
\end{Theorem}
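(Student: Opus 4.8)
The plan is to localize $\phi$ near $p$ using properness, feed the localized morphism into the reduction for real analytic morphisms derived above from Theorems \ref{TheoremA} and \ref{TheoremB}, and then exploit that the image of a suitable compact set under a monomial morphism is semi analytic, by the Tarski--Seidenberg theorem; the passage from a single \'etoile to a finite family is handled by properness of the projection from the vo\^ute \'etoil\'ee of $X$, exactly as in the proof of Theorem \ref{TheoremA}.

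First I would set up the local picture. As $\phi(Y)=\phi(Y_{\mathrm{red}})$ and $\phi$ stays proper on $Y_{\mathrm{red}}$, I may assume $Y$ reduced. Choose a compact semi analytic neighbourhood $W$ of $p$ in $X$ (for instance a closed coordinate ball) and put $K=\phi^{-1}(W)$; by properness $K$ is a compact neighbourhood in $Y$, and $\phi(K)=\phi(Y)\cap W$. Fix an \'etoile $h$ over (the complexification of) $X$ with $h_X\in\mathrm{int}(W)$. Applying the statement recorded after Theorem \ref{TheoremB} (the real analytic form of Theorems \ref{TheoremA} and \ref{TheoremB}) to $\phi:Y\to X$, the compact neighbourhood $K$, and $h$, I obtain $d_h:X_h\to X\in h$ with $X_h$ a nonempty smooth space and $d_h$ a composition of local blow ups of nonsingular subvarieties, together with monomial morphisms $\phi_{ij}:Y_{ij}\to X_{ij}$ and analytic morphisms $\epsilon_{ij}:X_h\to X_{ij}$, such that, by (\ref{eq22}), $d_h^{-1}(\phi(K))$ is a finite union of sets $\epsilon_{ij}^{-1}(\phi_{ij}(S_{ij}))$, where each $S_{ij}$ is of the form $\delta_{ij}^{-1}(K_i)$ with $\delta_{ij}:Y_{ij}\to Y_i$ a composition of local blow ups and $K_i$ a compact neighbourhood in $Y_i$ (for $i=0$ this is $K$).

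Next I would check that $d_h^{-1}(\phi(K))$ is semi analytic in $X_h$. In the coordinate charts on $Y_{ij}$ and $X_{ij}$ furnished by the local monomializations, $\phi_{ij}$ is given by a monomial, hence polynomial, map; and the auxiliary compact neighbourhoods entering the construction may be chosen so that, in these charts, $S_{ij}$ is semialgebraic (a finite union of closed boxes, pulled back through the intervening local blow ups). The Tarski--Seidenberg theorem then shows that $\phi_{ij}(S_{ij})$ is semialgebraic in each coordinate chart of $X_{ij}$, hence semi analytic, so $\phi_{ij}(S_{ij})$ is semi analytic in $X_{ij}$. Since $\epsilon_{ij}$ is analytic, $\epsilon_{ij}^{-1}$ carries semi analytic sets to semi analytic sets, and a finite union of semi analytic sets is semi analytic, whence $d_h^{-1}(\phi(K))$ is semi analytic in $X_h$. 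Put $\Omega_h=d_h^{-1}(\mathrm{int}(W))$, an open neighbourhood of $h_{X_h}$ in $X_h$; since $\phi(K)=\phi(Y)\cap W$, on $\Omega_h$ we have $d_h^{-1}(\phi(Y))\cap\Omega_h=d_h^{-1}(\phi(K))\cap\Omega_h$, which is semi analytic in $\Omega_h$. Now choose open neighbourhoods $V_h'\subset V_h$ of $h_{X_h}$ with $\overline{V_h'}\subset V_h$ and $\overline{V_h}\subset\Omega_h$, $\overline{V_h}$ compact, and set $\pi_h=d_h|_{V_h}:V_h\to X$ (still a composition of local blow ups of nonsingular subvarieties, the open immersion $V_h\hookrightarrow X_h$ being the local blow up with empty centre) and $N_h=\overline{V_h'}$, a compact neighbourhood of $h_{X_h}$ in $V_h$. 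Then $V_h$ is smooth and $\pi_h^{-1}(\phi(Y))=d_h^{-1}(\phi(Y))\cap V_h$, being the restriction of the semi analytic set $d_h^{-1}(\phi(K))$ to the open set $V_h$, is semi analytic in $V_h$.

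Finally I would extract a finite family, as in the proof of Theorem \ref{TheoremA}. The sets $\mathcal E_{V_h'}$, basic open subsets of the vo\^ute \'etoil\'ee of $X$ corresponding to the product of local blow ups $V_h'\hookrightarrow X_h\xrightarrow{d_h}X$, contain $h$; as $h$ ranges over the \'etoiles of $X$ with centre over $\mathrm{int}(W)$ they cover the compact set $\rho_X^{-1}(p)$, where $\rho_X$ is the proper projection from the vo\^ute \'etoil\'ee of $X$, so I extract a finite subcover $\mathcal E_{V_{h_1}'},\dots,\mathcal E_{V_{h_s}'}$. The complement $C$ of their union is closed, hence $\rho_X|_C$ is proper, $\rho_X(C)$ is closed in $X$, and $p\notin\rho_X(C)$; so $U=X\setminus\rho_X(C)$ is an open neighbourhood of $p$, and for $q\in U$ any \'etoile over $q$ lies in some $\mathcal E_{V_{h_k}'}$, whence $q\in d_{h_k}(V_{h_k}')\subset\pi_{h_k}(N_{h_k})$. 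Thus $\bigcup_{k=1}^s\pi_{h_k}(N_{h_k})$ contains the open neighbourhood $U$ of $p$ and is compact, so it is a compact neighbourhood of $p$ in $X$, and $(V_{h_k},\pi_{h_k},N_{h_k})_{k=1}^s$ is the required data. I expect the main obstacle to be the semi-analyticity of the monomial images $\phi_{ij}(S_{ij})$: this is where the monomial structure of $\phi_{ij}$ and the Tarski--Seidenberg theorem are genuinely used, and where one must choose the auxiliary compact neighbourhoods compatibly with the monomial coordinate charts; once that is settled, the remainder is the \'etoile and vo\^ute \'etoil\'ee bookkeeping already developed for Theorems \ref{TheoremA} and \ref{TheoremB}.
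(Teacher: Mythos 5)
Your overall architecture is the paper's: localize at $p$ using properness, invoke the real analytic consequence of Theorems \ref{TheoremA} and \ref{TheoremB} to write $d_h^{-1}(\phi(K))$ as a finite union of sets $\epsilon_{ij}^{-1}(\phi_{ij}(\,\cdot\,))$ with $\phi_{ij}$ monomial, and obtain the finite family in 2) from properness of the projection from the vo\^ute \'etoil\'ee. But there is a genuine gap at the central step, the semianalyticity of the sets $\phi_{ij}(S_{ij})$ with $S_{ij}=\delta_{ij}^{-1}(K_i)$. You propose to choose the auxiliary compact neighbourhoods so that $S_{ij}$ is semialgebraic in the monomial charts and then apply Tarski--Seidenberg. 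This cannot be arranged: already for $i=0$ the relevant set is $\delta_{0j}^{-1}(\phi^{-1}(W))$, the pullback of the preimage of $W$ under the arbitrary proper analytic map $\phi$, over which you have no control; it is a compact semianalytic set, not a semialgebraic one in the analytic coordinates furnished by local monomialization. The image of a compact semianalytic set under an analytic (even monomial) map is in general only subanalytic --- this is precisely why the class of subanalytic sets is strictly larger than that of semianalytic sets --- so Tarski--Seidenberg does not apply to it. The paper circumvents exactly this point by the identity (\ref{eq31}): since each $\phi_{ij}(Y_{ij})$ maps into $\phi(Y)$ and $\phi(Y)\cap U=\phi(K)\cap U$ by properness, one has $d_h^{-1}(\phi(K))\cap d_h^{-1}(U)=d_h^{-1}(U)\cap\bigcup_{i,j}\epsilon_{ij}^{-1}(\phi_{ij}(Y_{ij}))$, which replaces the images of the uncontrolled compact sets by the images of the \emph{whole} spaces $Y_{ij}$; it is the full image of a monomial morphism that is semianalytic by Tarski--Seidenberg (Theorem 1.5 of \cite{BM3}), and preimages under the analytic maps $\epsilon_{ij}$ then preserve semianalyticity. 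You need to make this replacement before invoking Tarski--Seidenberg.

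A secondary point: your finiteness argument is phrased on a vo\^ute \'etoil\'ee ``of $X$,'' but the \'etoile machinery in this paper is only available for the complexification $\tilde X$. Running the covering argument on $\mathcal E_{\tilde X}$, one must still show that real points $q$ near $p$ lie in the images of the \emph{real parts} of the chosen compact neighbourhoods; an \'etoile with real center $q$ on $\tilde X$ need not have a real center on $\tilde X_{f_i}$. The paper handles this by noting that off the nowhere dense exceptional loci $F_{f_i}$ the maps $\tilde d_{f_i}$ are open embeddings (so the centers there are forced to be real), and then concluding by a density and compactness argument. This is a repairable omission rather than a wrong step, but it does require the extra argument.
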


\begin{proof} 
Let $\tilde\phi:\tilde Y\rightarrow \tilde X$ be a complexication of $\phi:Y\rightarrow X$ so that  $\tilde X$ is smooth and $\tilde Y$ is reduced.

Let $\tilde U$ be a relatively compact open neighborhood of $p$ in $\tilde X$ which is invariant under the auto conjugation of $\tilde X$ and let $\tilde L$ be the closure of $\tilde U$ in $\tilde X$. Let $L=\tilde L\cap X$, a compact neighborhood of $p$ in $X$. 
Let $K'=\tilde\phi^{-1}(\tilde L)$. The real part of $K'$ is $K=\phi^{-1}(L)$ which is compact since $\phi$ is proper.  Let $N$ be a compact neighborhood of $K$ in $\tilde Y$ which contains $K$ and is preserved by the auto conjugation of $\tilde Y$. Let $\tilde K=K'\cap N$. The set $\tilde K$ is a compact neighborhood in $\tilde Y$ which is preserved by the auto conjugation of $\tilde Y$ such that the real part of $\tilde K$ is $K$.
Let $U$ be the real part of $\tilde U$ which is an open neighborhood of $p$ in $X$ with closure $L$ in $X$. Let $V=\phi^{-1}(U)$, whose closure is  $K=\phi^{-1}(L)$. We have that
\begin{equation}\label{eq30}
\phi(V)=\phi(K)\cap U.
\end{equation}

For each $h\in \mathcal E_{\tilde X}$ such that $X_h\ne\emptyset$, we have 
associated complex analytic morphisms $\tilde d_h:\tilde X_h\rightarrow \tilde X$ with real part $d_h:X_h\rightarrow X$, and associated diagrams (\ref{eq20}) with real part (\ref{eq21}). 
For all $i,j$, we have that
$$
d_h^{-1}(\phi(K))=d_h^{-1}(\phi(K))\cap[\cup_{i,j}\epsilon_{ij}^{-1}(\phi_{ij}(Y_{ij}))]
$$
by (\ref{eq22}).  Thus
\begin{equation}\label{eq31}
\begin{array}{lll}
d_h^{-1}(\phi(V))&=&d_h^{-1}(\phi(K)\cap U)=d_h^{-1}(\phi(K))\cap d_h^{-1}(U)\\
&=& d_h^{-1}(\phi(K))\cap [\cup_{i,j}\epsilon_{ij}^{-1}(\phi_{ij}(Y_{ij}))]\cap d_h^{-1}U)\\&=&d_h^{-1}(U)\cap[\cup_{i,j}\epsilon_{ij}^{-1}(\phi_{ij}(Y_{ij}))].
\end{array}
\end{equation}

We now establish that $d_h^{-1}(\phi(V))$ is a semi analytic subset of $X_h$. For all $i,j$, $\phi_{ij}( Y_{ij})$ is semi analytic in $X_{ij}$ since $\phi_{ij}$ is a monomial morphism (by the Tarski Seidenberg  theorem, c.f. Theorem1.5 \cite{BM3}. Thus $d_h^{-1}(\phi(V))$ is semianalytic in $X_h$ by (\ref{eq31}).

For $h\in \mathcal E_{\tilde X}$, let $\tilde C_h$ be an open relatively compact neighborhood of $h_{\tilde X}$ in $\tilde X_h$ on which the auto conjugation acts. Let $\overline d_h:\tilde C_h\rightarrow \tilde X$ be the induced morphism. Let $C$ be a compact neighborhood of $p$ in $\tilde X$ such that $C\subset \tilde U$ and let $C'=\rho_{\tilde X}^{-1}(C)$, The set $C'$ is compact since $\rho_{\tilde X}$ is proper (Theorem 3.4 \cite{Hcar} or Theorem 3.16 \cite{H}). The open sets $\mathcal E_{\overline d_f}$ for $f\in C'$ give an open cover of $C'$, so there is a finite sub cover, which we index as 
$\mathcal E_{\overline d_{f_1}},\ldots,\mathcal E_{\overline d_{f_t}}$. 
We may replace $\tilde X_{f_i}$ with $\tilde d_{f_i}^{-1}(\tilde U)$, so that (\ref{eq31}) implies that
$$
d_h^{-1}(\phi(Y))=\cup_{i,j}\epsilon_{ij}^{-1}(\phi_{ij}(Y_{ij}))
$$
 is a semi analytic set.

Let $C_{f_i}$ be the closure of $\tilde C_{f_i}$ in $\tilde X_{\tilde d_{f_i}}$ which is compact. Since $\rho_{\tilde X}$ is surjective and continuous, we have inclusions of compact sets 
$$
p\in C\subset\cup_{i=1}^t\tilde d_{f_i}(C_{f_i}).
$$
Since $X$ and $\tilde X$ are smooth and each $\tilde d_{f_i}$ is a finite product of local blowups of closed analytic subspaces which are preserved by the auto conjugation, if $F_{f_i}$ is the the union of
the preimage in $\tilde X_{f_i}$ of these subspaces, then $F_{f_i}$ is a nowhere dense closed analytic subspace of $\tilde X_{f_i}$ which is preserved by the auto conjugation such that 
$\tilde X_{f_i}\setminus F_{f_i} \rightarrow \tilde X$ is an open embedding. The image $\tilde d_{f_i}(F_{f_i})$ is nowhere dense in $\tilde X$, and since $\tilde X$ and $X$ are smooth varieties,
 $\tilde d_{f_i}(F_{f_i})\cap X$ is nowhere dense in $X$ (as  in the proof of Theorem 8.7 \cite{C}).

Let $C^*=C\cap X$ which is a compact neighborhood of $p$ in $X$ which is contained in $L$. Let $p'\in C^*\setminus \cup_{i=1}^t\tilde d_{f_i}(F_{f_i})$. Then there exist $i$ and $e\in \mathcal E_{\overline d_{f_i}}$ such that $e_{\tilde X}=p'$. Let $p_i=e_{\tilde X_{f_i}}\in C_i\subset \tilde X_{f_i}$. Since $p_i\not\in F_{f_i}$, $\tilde d_{f_i}$ is an open embedding near $p_i$, and since $p'$ is real, $p_i\in X_{f_i}$ is real. Thus $p'\in d_{f_i}(C_{f_i}\cap X_{f_i})$. We thus have that the set 
$C^*\setminus \cup_{i=1}^t\tilde d_{f_i}(F_{f_i})$, which we have shown is dense in $C^*$, is contained in the compact set $\cup_{i=1}^t d_{f_i}(C_{f_i}\cap X_{f_i})$. Thus its closure $C^*$ is contained in $\cup_{i=1}^t d_{f_i}(C_{f_i}\cap X_{f_i})$, giving the conclusion of 2) of the theorem.

\end{proof}

\begin{Theorem}\label{TheoremD} Suppose that $X$  is a smooth real analytic space. Suppose that $A$ is a sub analytic subset of $X$ and  that $p\in X$. Then there exists a finite number of real analytic maps $\pi_{\alpha}:V_{\alpha}\rightarrow X$ such that:
\begin{enumerate}
\item[1)] Each $\pi_{\alpha}$ is a composition of local blowups of nonsingular sub varieties,
\item[2)] There exist compact neighborhoods  $N_{\alpha}$ in $V_{\alpha}$ for all $\alpha$ such that  $\cup_{\alpha}\pi_{\alpha}(N_{\alpha})$ is a compact neighborhood of $p$ in $X$,
\item[3)] For all $\alpha$,  $\pi_{\alpha}^{-1}(A)$ is a semianalytic subset of $V_{\alpha}$.
\end{enumerate}
\end{Theorem}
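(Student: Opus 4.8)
The plan is to reduce Theorem \ref{TheoremD} to Theorem \ref{TheoremC} by expressing the sub analytic set $A$ locally as a finite boolean combination of images of proper real analytic maps, and then applying Theorem \ref{TheoremC} simultaneously to these maps. First, since the conclusion is local at $p$ and concerns a compact neighborhood of $p$, I would choose an open neighborhood $U$ of $p$ in $X$ so small that $A\cap U$ lies in the elementary closure of $\Gamma(U)$; that is, $A\cap U$ is obtained from finitely many sets $S_1,\ldots,S_N\subset U$, each of the form $S_k=g_k(Y_k)$ for a proper real analytic map $g_k:Y_k\to U$, by the operations of finite union, finite intersection, and complement. Shrinking further (and using that semi analytic subsets form a family closed under these operations, and that pre images of semi analytic sets under real analytic maps are semi analytic), it suffices to arrange that each $\pi_\alpha^{-1}(S_k)$ is semi analytic in $V_\alpha$; for then $\pi_\alpha^{-1}(A\cap U)$, being the corresponding boolean combination of the $\pi_\alpha^{-1}(S_k)$, is semi analytic in $\pi_\alpha^{-1}(U)$, and since the $V_\alpha$ may be taken to map into $U$, $\pi_\alpha^{-1}(A)$ is semi analytic in $V_\alpha$.

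Next I would apply Theorem \ref{TheoremC} to handle the finitely many proper maps $g_k$ at once. One cannot simply apply it separately to each $g_k$ and intersect the resulting families, since the $V_\alpha$ produced for different $k$ need not be compatible. Instead I would take the disjoint union $Y'=\coprod_{k=1}^N Y_k$ with the induced proper real analytic map $g:Y'\to U$ (proper because each $g_k$ is and the union is finite), so that $g(Y')=\bigcup_k S_k$. Theorem \ref{TheoremC}, applied to $g:Y'\to U$ and the point $p$, yields real analytic maps $\pi_\alpha:V_\alpha\to U$ (each a composition of local blowups of nonsingular subvarieties, hence extending the required property 1)), compact neighborhoods $N_\alpha\subset V_\alpha$ with $\bigcup_\alpha\pi_\alpha(N_\alpha)$ a compact neighborhood of $p$ (property 2)), such that $\pi_\alpha^{-1}(g(Y'))=\bigcup_k\pi_\alpha^{-1}(S_k)$ is semi analytic in $V_\alpha$.

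The remaining point is that semi analyticity of the union $\bigcup_k\pi_\alpha^{-1}(S_k)$ does not obviously give semi analyticity of each individual $\pi_\alpha^{-1}(S_k)$, which is what the boolean reconstruction of $A$ needs. To get the individual pieces, I would instead apply Theorem \ref{TheoremC} not just to $g$ but to each $g_k$ individually and, crucially, pass to a common refinement of the resulting blowup towers by taking joins. Concretely: run the proof of Theorem \ref{TheoremC} with the single compact neighborhood data, but replace the single map $\phi_{ij}$-analysis there by recording, for each of the finitely many $g_k$, the semi analytic set $\pi_\alpha^{-1}(S_k)$; because the construction in Theorems \ref{TheoremA}, \ref{TheoremB} and \ref{TheoremC} is driven by an \'etoile $h$ and the voute etoilee $\mathcal E_{\tilde X}$ is quasi compact over $\tilde X$, one may choose a single finite cover $\mathcal E_{\overline d_{f_1}},\ldots,\mathcal E_{\overline d_{f_t}}$ that works simultaneously for all $g_k$, and then on each $\tilde X_{f_i}$ each $\pi_\alpha^{-1}(S_k)$ is, by (\ref{eq22}) and the Tarski--Seidenberg theorem applied to the monomial morphisms $\phi_{ij}$ associated to $g_k$, a semi analytic set. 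The main obstacle is precisely this simultaneous handling of the finitely many defining maps of the sub analytic set $A$: one must verify that a single composition of local blowups (equivalently, a single \'etoile center and its finite refinement) can be chosen to make all the sets $\pi_\alpha^{-1}(g_k(Y_k))$ semi analytic at once, rather than obtaining incompatible towers for different $k$. This is resolved by performing the \'etoile argument of Theorem \ref{TheoremC} with the finite disjoint union of data $\coprod_k(Y_k\to U)$ but tracking each summand's image separately through the monomialization diagrams (\ref{eq21}), and then invoking properness of $\rho_{\tilde X}:\mathcal E_{\tilde X}\to\tilde X$ to extract the finite sub cover valid for all summands. Once $\pi_\alpha^{-1}(S_k)$ is semi analytic for every $k$ and every $\alpha$, the boolean combination defining $A\cap U$ shows $\pi_\alpha^{-1}(A)$ is semi analytic in $V_\alpha$, which is conclusion 3), and conclusions 1) and 2) are inherited directly from Theorem \ref{TheoremC}.
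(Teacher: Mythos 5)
Your proposal is correct and follows essentially the same route as the paper: write $A$ locally as a boolean combination of images of proper real analytic maps (the paper uses the normal form $A=\cup_k\cap_l(A_{kl}\setminus B_{kl})$ from Definition 6.10 of Hironaka), run the construction of Theorem \ref{TheoremC} for each defining map separately, and pass to a common refinement of the resulting towers within a fixed \'etoile $h$ via the join, so that each $d_h^{-1}(A_{kl})$ and $d_h^{-1}(B_{kl})$ is the preimage of a semi analytic set under the factorization map and hence semi analytic; condition 2) then follows by the same properness-of-$\rho_{\tilde X}$ finite subcover argument as in Theorem \ref{TheoremC}. Your identification of the simultaneous-compatibility issue and its resolution by joins is precisely the content of the paper's proof.
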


\begin{proof} After replacing $X$ with a suitable open neighborhood of $p$, we have by Definition 6.10 \cite{H}, an expression
$$
A=\cup_{k\in I}\cap_{l\in J}(A_{kl}\setminus B_{kl})
$$
where $I$ and $J$ are nonempty finite index sets and there are proper real analytic maps of reduced analytic spaces
$\phi_{kl}:Y_{kl}\rightarrow X$ and $\psi_{kl}:Z_{kl}\rightarrow X$ such that $A_{kl}=\phi_{kl}(Y_{kl})$ and $B_{kl}=\psi_{kl}(Z_{kl})$. 
Let $\tilde X$ be a smooth complexification of $X$ and let $\tilde\phi_{kl}:\tilde Y_{kl}\rightarrow \tilde X$ and $\tilde\psi_{kl}:\tilde Z_{kl}\rightarrow\tilde  X$ be complexifications of $\phi_{kl}$ and $\psi_{kl}$.
For each $h\in \mathcal E_{\tilde X}$, $k\in I$ and  $l\in J$ we construct as in the proof of Theorem \ref{TheoremC} complex analytic morphisms of smooth analytic spaces
$\tilde d_h^{kl}:(\tilde X_h)_{kl}\rightarrow \tilde X\in h$ and $(\tilde d'_h)^{kl}:(\tilde X'_h)_{kl}\rightarrow \tilde X\in h$  with auto conjugations, so that taking the invariants of the auto conjugations we have real analytic morphisms
$d_h^{kl}:(X_h)_{k,l}\rightarrow X$ and $(d_h')^{kl}:(X_h')_{kl}\rightarrow X$ such that $(d_h^{kl})^{-1}(A_{kl})$ is semianalytic in $(X_h)_{kl}$ and $((d_h')^{kl})^{-1}(B_{kl})$ is semianalytic in $(X_h')_{kl}$ for $k\in I$ and $l\in J$.
There exists $\tilde d_h:\tilde X_h\rightarrow \tilde X\in h$ with auto conjugation such that there are factorizations  $\tilde\beta_{kl}:\tilde X_h\rightarrow (\tilde X_h)_{kl}$ and $\tilde\gamma_{kl}:\tilde X_h\rightarrow (\tilde X_h')_{kl}$ 
for all $k\in I$ and $l\in J$. Thus taking the real part of  $\tilde d_h:\tilde X_h\rightarrow \tilde X$, we have real analytic morphisms
$\beta_{kl}:X_h\rightarrow (X_h)_{kl}$ and $\gamma_{kl}:X_h\rightarrow (X_h')_{kl}$ factoring through $X_h\rightarrow X$. Thus 
$$
d_h^{-1}(A_{kl})=\beta_{kl}^{-1}(d_h^{kl})^{-1}(A_{kl})
$$
and
$$
d_h^{-1}(B_{kl})=\gamma_{kl}^{-1}((d_h')^{kl})^{-1}(B_{kl})
$$
are semi analytic in $X_h$. 

We now proceed as in the proof of Theorem \ref{TheoremC} to obtain the condition 2) of Theorem \ref{TheoremD}.
\end{proof}

\begin{Theorem}\label{TheoremR} Let $X$ be a smooth connected real analytic space and let $A$ be a sub analytic subset of $X$. Let $p\in X$ and let $n=\dim X$. Then there exist a finite number of real analytic morphisms $\pi_{\alpha}:V_{\alpha}\rightarrow X$
which are finite sequences of local blowups over $X$ and  induce an open embedding of an open dense subset of $V_{\alpha}$ into $X$ such that:
\begin{enumerate}
\item[1)] Each $V_{\alpha}$ is isomorphic to $\RR^n$,
\item[2)] There exist compact neighborhoods $K_{\alpha}$ in $V_{\alpha}$ such that $\cup_{\alpha}(K_{\alpha})$ is a compact neighborhood of $p$ in $X$,
\item[3)] For each $\alpha$, $\pi_{\alpha}^{-1}(A)$ is union of quadrants in $\RR^n$.
\end{enumerate}
\end{Theorem}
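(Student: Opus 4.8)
The plan is to deduce Theorem \ref{TheoremR} almost formally from Theorem \ref{TheoremD} together with the classical rectilinearization theorem for \emph{semi analytic} sets (Hironaka \cite{H}, a consequence of resolution of singularities). First I would apply Theorem \ref{TheoremD} to $X$, the sub analytic set $A$, and the point $p$, obtaining finitely many real analytic maps $\pi_\alpha\colon V_\alpha\to X$, each a composition of local blowups of nonsingular subvarieties, with $V_\alpha$ smooth of pure dimension $n$ (blowing up a nonsingular center in a smooth space preserves smoothness and dimension), together with compact neighborhoods $N_\alpha\subset V_\alpha$ such that $\bigcup_\alpha\pi_\alpha(N_\alpha)$ is a compact neighborhood of $p$ in $X$ and each $\pi_\alpha^{-1}(A)$ is semi analytic in $V_\alpha$. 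Since the centers of the blowups are nowhere dense, over the complement in $V_\alpha$ of the union of the inverse images of the centers---an open dense subset---$\pi_\alpha$ is an open embedding into $X$; this property will be inherited by the further compositions below.

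Next I would fix $\alpha$, and for each point $q\in N_\alpha$ choose a chart $\Omega_q\subset V_\alpha$ containing $q$ that is isomorphic to an open subset of $\RR^n$, and apply the semi analytic rectilinearization theorem to the semi analytic set $\pi_\alpha^{-1}(A)\cap\Omega_q$ at the point $q$. This produces finitely many compositions of local blowups $\rho_{q,\beta}\colon W_{q,\beta}\to\Omega_q$ with each $W_{q,\beta}$ isomorphic to $\RR^n$ and inducing an open embedding of an open dense subset into $\Omega_q$, together with compact neighborhoods $K_{q,\beta}\subset W_{q,\beta}$ such that $\bigcup_\beta\rho_{q,\beta}(K_{q,\beta})$ is a compact neighborhood of $q$ and each $\rho_{q,\beta}^{-1}\bigl(\pi_\alpha^{-1}(A)\cap\Omega_q\bigr)$ is a union of quadrants in $\RR^n$. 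Composing, $\pi_\alpha\circ\rho_{q,\beta}\colon W_{q,\beta}\to X$ is again a finite sequence of local blowups over $X$ (the inclusion $\Omega_q\hookrightarrow V_\alpha$ being a trivial local blowup, and concatenations of sequences of local blowups are such sequences), it induces an open embedding of an open dense subset of $W_{q,\beta}\cong\RR^n$ into $X$, and $(\pi_\alpha\rho_{q,\beta})^{-1}(A)=\rho_{q,\beta}^{-1}\bigl(\pi_\alpha^{-1}(A)\cap\Omega_q\bigr)$ is a union of quadrants. The maps required by conclusions 1) and 3) of the theorem are these compositions, indexed by the triples $(\alpha,q,\beta)$.

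For conclusion 2) I would argue by compactness. Since $N_\alpha$ is compact and each $q\in N_\alpha$ lies in the interior of $\bigcup_\beta\rho_{q,\beta}(K_{q,\beta})$, finitely many such $q$---say $q$ ranging over a finite set $Q_\alpha\subset N_\alpha$---already satisfy $\bigcup_{q\in Q_\alpha}\bigcup_\beta\rho_{q,\beta}(K_{q,\beta})\supset N_\alpha$, hence $\bigcup_{q\in Q_\alpha,\,\beta}\pi_\alpha\rho_{q,\beta}(K_{q,\beta})\supset\pi_\alpha(N_\alpha)$. Taking the union over all $\alpha$, the resulting finite union of compact sets contains $\bigcup_\alpha\pi_\alpha(N_\alpha)$, which is a compact neighborhood of $p$; a finite union of compact sets that contains a neighborhood of $p$ is a compact neighborhood of $p$, which gives 2).

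The genuinely substantive work has already been done in Theorems \ref{TheoremA}--\ref{TheoremD}---it is there that local monomialization replaces Hironaka's flattening and fiber-cutting arguments---so I expect the main point needing care here to be purely at the interface: verifying that Theorem \ref{TheoremD} supplies exactly the hypotheses under which the classical semi analytic rectilinearization applies, noting in particular that the target spaces isomorphic to $\RR^n$ come \emph{only} from that last step and not from Theorem \ref{TheoremD}, and keeping the book-keeping of the compact neighborhoods consistent through the two successive reductions (Theorem \ref{TheoremD}, and then semi analytic rectilinearization, each carried out over a compact set). The stability of ``composition of local blowups inducing an open embedding of an open dense subset'' under composition is routine once all centers are nowhere dense, and should pose no difficulty.
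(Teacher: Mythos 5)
Your proposal is correct and follows essentially the same route as the paper, whose proof is the one-line observation that the result follows from Theorem \ref{TheoremD} together with Hironaka's rectilinearization of semi analytic sets (Proposition 7.2 and Lemma 7.2.1 of \cite{H}); you have simply spelled out the composition of the two reductions and the compactness bookkeeping that the paper leaves implicit. No gaps.
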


\begin{proof} The proof follows from Theorem \ref{TheoremD} and Proposition 7.2 \cite{H} and Lemma 7.2.1 \cite{H}.
\end{proof}

\end{document}